\numberwithin{equation}{section}
\theoremstyle{plain}
\newtheorem{thm}{Theorem}[section]
\newtheorem{prop}{Proposition}[section]
\newtheorem{cor}{Corollary}[section]
\newtheorem{lem}{Lemma}[section]
\newcommand{\cA}{\mathcal{A}}
\newcommand{\cB}{\mathcal{B}}
\newcommand{\cD}{\mathcal{D}}
\newcommand{\cF}{\mathcal{F}}
\newcommand{\cH}{\mathcal{H}}
\newcommand{\cN}{\mathcal{N}}
\newcommand{\cR}{\mathcal{R}}
\newcommand{\cT}{\mathcal{T}}
\newcommand{\cU}{\mathcal{U}}
\newcommand{\dC}{\mathbb{C}}
\newcommand{\dE}{\mathbb{E}}
\newcommand{\dI}{\mathbb{I}}
\newcommand{\dN}{\mathbb{N}}
\newcommand{\dP}{\mathbb{P}}
\newcommand{\dR}{\mathbb{R}}
\newcommand{\dV}{\mathbb{V}}
\newcommand{\dT}{\mathbb{T}}
\newcommand{\dZ}{\mathbb{Z}}
\newcommand{\dd}{\mathrm{d}}
\newcommand{\de}{\mathrm{e}}
\newcommand{\di}{\mathrm{i}}
\newcommand{\wh}{\widehat}
\newcommand{\wht}{\wh{\theta}}
\newcommand{\whr}{\wh{\rho}}
\newcommand{\hsp}{\hspace{0.5cm}}
\newcommand{\veps}{\varepsilon}
\newcommand{\VVert}{\vert\hspace{-0.025cm} \Vert}
\newcommand{\limn}{\lim_{n\, \rightarrow\, \infty}}
\newcommand{\liml}{\overset{\cD}{\longrightarrow}}
\newcommand{\cov}{\dC\text{ov}}
\renewcommand{\vec}{\textnormal{vec}}
\newcommand{\cvgas}{\hsp \textnormal{a.s.}}
\newcommand{\iid}{\overset{\text{iid}}{\sim}}
\renewcommand{\qedsymbol}{\scriptsize{$\Box$}\normalsize}
\newcommand{\udots}{\mathinner{\mskip1mu\raise1pt\vbox{\kern7pt\hbox{.}}
  \mskip2mu\raise4pt\hbox{.}\mskip2mu\raise7pt\hbox{.}\mskip1mu}}
\email{Frederic.Proia@univ-angers.fr}
\keywords{Stable autoregressive process, Least squares estimation, Asymptotic properties of estimators, Residual autocorrelation, Statistical test for residual correlation, Durbin-Watson statistic.}
\begin{document}

\title[Testing for residual correlation in the autoregressive process]
{Testing for residual correlation of any order in the autoregressive process
\vspace{2ex}}
\address{Laboratoire Angevin de REcherche en MAth\'ematiques (LAREMA), CNRS, Universit\'e d'Angers, Universit\'e Bretagne Loire. 2 Boulevard Lavoisier, 49045 Angers cedex 01.}
\author{Fr\'ed\'eric Pro\"ia}
\thanks{}

\begin{abstract}
We are interested in the implications of a linearly autocorrelated driven noise on the asymptotic behavior of the usual least squares estimator in a stable autoregressive process. We show that the least squares estimator is not consistent and we suggest a sharp analysis of its almost sure limiting value as well as its asymptotic normality. We also establish the almost sure convergence and the asymptotic normality of the estimated serial correlation parameter of the driven noise. Then, we derive a statistical procedure enabling to test for correlation of any order in the residuals of an autoregressive modelling, giving clearly better results than the commonly used portmanteau tests of Ljung-Box and Box-Pierce, and \textcolor{black}{appearing to} outperform the Breusch-Godfrey procedure on small-sized samples.
\end{abstract}

\maketitle

\vspace{-0.5cm}


\section{Introduction}


In the context of standard linear regression, the econometricians Durbin and Watson \cite{DurbinWatson50}--\cite{DurbinWatson51}--\cite{DurbinWatson71} proposed, in the middle of last century, a detailed analysis on the behavior of the estimated residual set when the driven noise \textcolor{black}{is a first-order autoregressive process}. They gave their name to a statistical procedure still commonly used nowadays, relying on a quadratic forms ratio inspired by the previous work of Von Neumann \cite{VonNeumann41}, which provides pretty good results in deciding whether the serial correlation might be considered as significative. When some of the regressors are lagged dependent random variables, and especially in the autoregressive framework, Malinvaud \cite{Malinvaud61} and shortly afterwards Nerlove and Wallis \cite{NerloveWallis66} highlighted the 	incompatibility of the Durbin-Watson procedure, potentially leading to biased and inadequate conclusions. In the 1970s, many improvements came to strengthen the study of the residual set from a least squares estimation with lagged dependent random variables, leading to statistical procedures among which we will cite the ones of Box-Pierce \cite{BoxPierce70}, of Ljung-Box \cite{LjungBox78}, of Breusch-Godfrey \cite{Breusch78}--\cite{Godfrey78} and the H-Test of Durbin \cite{Durbin70}. To the best of our knowledge, the Breusch-Godfrey procedure is actually the only one taking into account the dynamic of the generating process of the noise to test for correlation of any order in the residuals. In this paper, we discuss on two related topics that we shall now introduce.

\smallskip

The bias in the least squares estimation is a well-known issue when the driven noise is autocorrelated. As mentioned above, Malinvaud \cite{Malinvaud61} and Nerlove and Wallis \cite{NerloveWallis66} investigated the first-order autoregressive process where the driven noise is also a first-order autoregressive process, and showed that the least squares estimator is still (weakly) convergent, but remains inconsistent. Lately in 2006, Stocker \cite{Stocker06} gave substantial contributions to the study of the asymptotic bias resulting from lagged dependent regressors by considering any stationary autoregressive moving-average process as a driven noise for a dynamic modelling. As for us, we will consider the causal autoregressive process of order $p$ given, for all $t \in \dZ$, by
\begin{equation*}
\cA(L) Y_{t} = Z_{t}
\end{equation*}
where $L$ is the lag operator and $\cA$ is an autoregressive polynomial. We will also suppose that the driven noise $(Z_{t})$ itself follows the causal autoregressive process of order $q$ given, for all $t \in \dZ$, by
\begin{equation*}
\cB(L) Z_{t} = V_{t}
\end{equation*}
where $\cB$ is another autoregressive polynomial, and that the residual process $(V_{t})$ is merely a white noise. To locate the context, we start in a first part by investigating the asymptotic properties of the latter process at the heart of this paper. In a second part, we are interested in a sharp analysis of the asymptotic behavior of the least squares estimator of the parameter driving $\cA$. We establish its almost sure convergence together with its asymptotic normality and some rates of convergence. This may as well be seen as a natural extension of the recent work of Bercu and Pro\"ia \cite{BercuProia13} and of Pro\"ia \cite{Proia13} in which the almost sure convergence and the asymptotic normality are established for $q=1$ and $p=1$, and for $q=1$ and any $p \geq 1$, respectively, under some stability assumptions that we also weaken here. One can also cite \cite{BitsekiDjelloutProia13} where moderate deviations come to strengthen the convergences in the particular case where $p=1$ and $q=1$.

\smallskip

In the third part, we suggest an (of course biased) estimator of the parameter driving $\cB$, again using a least squares methodology. Its almost sure convergence is established together with its asymptotic normality under the null of absence of serial correlation in the residuals. We shall make here the parallel with the procedures of Box-Pierce \cite{BoxPierce70} and Ljung-Box \cite{LjungBox78} on the one hand, and with the procedure of Breusch-Godfrey \cite{Breusch78}--\cite{Godfrey78} on the other hand. The statistical procedure that we propose in a last part is inspired by the well-known procedure due to the eponymous econometricians, Durbin and Watson. Since their seminal work \cite{DurbinWatson50}--\cite{DurbinWatson51}--\cite{DurbinWatson71}, many improvements have been brought to the Durbin-Watson statistic, related to its behavior under the null and to the power of the associated procedure. One can cite for example the works of Maddala and Rao \cite{MaddalaRao73} in 1973, of Park \cite{Park75} in 1975, of Inder \cite{Inder84}--\cite{Inder86} in the 1980s, of Durbin \cite{Durbin86} in 1986, or of King and Wu \cite{KingWu91} in 1991. Whereas the upper and lower bounds of the test were previously established by a Monte-Carlo study, the asymptotic normality of the Durbin-Watson statistic is suggested without proof in \cite{Durbin70} under strong assumptions (such as gaussianity), and finally proved under less restrictive hypothesis in \cite{BercuProia13}--\cite{Proia13} for $p \geq 1$ and $q=1$, even under the alternative. \textcolor{black}{We conclude the paper by giving a quick summary of some simulation results about} the empirical power of our procedure, compared with the ones mentioned above. We observe in particular that it seems to give better results than the usual procedures on small-sized samples. The proofs of our results are postponed to the Appendix, they essentially rely on a martingale approach \cite{Duflo97}--\cite{HallHeyde80}.

\smallskip

\noindent \textbf{Notations.} In the whole paper, for any matrix $M$, $M^{\prime}$ is the transpose and $\VVert M \VVert$ is the spectral norm of $M$. For any square matrix $M$, $\textnormal{tr}(M)$, $\det(M)$, $\lambda_{\text{min}}(M)$ and $\lambda_{\text{max}}(M)$ are the trace, the determinant, the smallest eigenvalue and the largest eigenvalue of $M$, respectively. For any vector $v$, $\Vert v \Vert$ stands for the euclidean norm, $\Vert v \Vert_1$ and $\Vert v \Vert_{\infty}$ for the 1--norm and the infinite norm of $v$. The identity and the exchange matrices of any order $h \in \dN^{*}$ will be respectively called
\begin{equation*}
I_{h} = \begin{pmatrix}
1 & 0 & \hdots & 0\\
0 & 1 & \hdots & 0\\
\vdots & \vdots & \ddots & \vdots\\
0 & 0 & \hdots & 1
\end{pmatrix} \hsp \text{and} \hsp J_{h} = \begin{pmatrix}
0 & \hdots & 0 & 1\\
0 & \hdots & 1 & 0\\
\vdots & \udots & \vdots & \vdots\\
1 & \hdots & 0 & 0
\end{pmatrix}.
\end{equation*}
Finally, $M \otimes N$ is the Kronecker product between any matrices $M$ and $N$, and $\vec(M)$ is the vectorialization of $M$.


\section{On the asymptotic properties of the process}


For all $t \in \dZ$, we consider the process given by
\begin{equation}
\label{ModARGen}
\left\{
\begin{array}{c}
Y_{t} = \theta^{\, \prime} \Phi_{t-1}^{p} + Z_{t} \vspace{1ex}\\
Z_{t} = \rho^{\, \prime} \Psi_{t-1}^{q} + V_{t} 
\end{array}
\right.
\end{equation}
where
\begin{equation}
\label{PhiPsi}
\Phi_{t}^{p} = \begin{pmatrix}
Y_{t} & Y_{t-1} & \hdots & Y_{t-p+1}
\end{pmatrix}^{\prime} \hsp \text{and} \hsp \Psi_{t}^{q} = \begin{pmatrix}
Z_{t} & Z_{t-1} & \hdots & Z_{t-q+1}
\end{pmatrix}^{\prime}
\end{equation}
for the couple of parameters $p \in \dN$ and $q \in \dN$. The residual process $(V_{t})$ is a white noise such that $\dE[V_1^2] = \sigma^2 > 0$ and $\dE[V_1^4] = \tau^4 < \infty$. We assume that the polynomials
\begin{equation*}
\cA(z) = 1 - \theta_1\, z - \hdots - \theta_{p}\, z^{p} \hsp \text{and} \hsp \cB(z) = 1 - \rho_1\, z - \hdots - \rho_{q}\, z^{q}
\end{equation*}
are causal, that is $\cA(z) \neq 0$ and $\cB(z) \neq 0$ for all $z \in \dC$ such that $\vert z \vert \leq 1$. We also assume that $\theta_{p} \neq 0$, hence that $(Y_{t})$ is at least an AR($p$) process. Let us start by a little but useful technical lemma.

\begin{lem}
\label{LemCausal}
Under the causality assumptions on $\cA$ and $\cB$, $(Y_{t})$ is a causal and ergodic AR($p+q$) process defined, for all $t \in \dZ$, as
\begin{equation}
\label{ModAR}
Y_{t} = \beta^{\, \prime} \Phi_{t-1}^{p+q} + V_{t}
\end{equation}
where, for all $1 \leq k \leq p+q$,
\begin{equation}
\label{Beta}
\beta_{k} = \rho_{k} - \theta_1\, \rho_{k-1} - \theta_2\, \rho_{k-2} - \hdots - \theta_{k-1}\, \rho_1 + \theta_{k}
\end{equation}
with the convention that $\theta_{j} = 0$ for $j \notin \{ 1, \hdots, p \}$ and $\rho_{j} = 0$ for $j \notin \{ 1, \hdots, q \}$.
\end{lem}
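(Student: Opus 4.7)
The plan is to combine the two equations in \eqref{ModARGen} into a single autoregressive equation of order $p+q$ driven by $(V_t)$, then read off the coefficients and invoke standard causality/ergodicity theory.

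First I would rewrite \eqref{ModARGen} in operator form as $\cA(L) Y_t = Z_t$ and $\cB(L) Z_t = V_t$, and apply $\cB(L)$ to the first identity to obtain
\begin{equation*}
\cB(L)\cA(L)\, Y_t \,=\, \cB(L)\, Z_t \,=\, V_t.
\end{equation*}
Setting $\cC(z) = \cA(z)\cB(z)$, it remains to expand this product. A direct computation gives
\begin{equation*}
\cC(z) \,=\, \Bigl( 1 - \sum_{i=1}^{p} \theta_i z^i \Bigr)\Bigl( 1 - \sum_{j=1}^{q} \rho_j z^j \Bigr) \,=\, 1 - \sum_{k=1}^{p+q} \beta_k\, z^{k},
\end{equation*}
where the coefficient of $z^k$ in the product is, with the convention that $\theta_j = 0$ for $j \notin \{1,\ldots,p\}$ and $\rho_j = 0$ for $j \notin \{1,\ldots,q\}$,
\begin{equation*}
\beta_k \,=\, \theta_k + \rho_k - \sum_{\substack{i+j=k \\ i,j \geq 1}} \theta_i \rho_j \,=\, \rho_k - \theta_1 \rho_{k-1} - \theta_2 \rho_{k-2} - \hdots - \theta_{k-1} \rho_1 + \theta_k,
\end{equation*}
which is exactly \eqref{Beta}. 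Hence $\cC(L) Y_t = V_t$ reads as \eqref{ModAR}.

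Next I would verify the causality of this AR$(p+q)$ representation. Since $\cA(z) \neq 0$ and $\cB(z) \neq 0$ for all $z \in \dC$ with $\vert z \vert \leq 1$ by hypothesis, their product $\cC = \cA \cB$ also vanishes nowhere on $\{\vert z \vert \leq 1\}$. The standard theory of linear difference equations (see, e.g., \cite{HallHeyde80}) then ensures that $(Y_t)$ admits a causal moving-average representation $Y_t = \sum_{k \geq 0} \psi_k V_{t-k}$ with $\sum_k \vert \psi_k \vert < \infty$, where the $(\psi_k)$ are the coefficients of the power series expansion of $1/\cC(z)$ on a neighborhood of the closed unit disk. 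In particular, $\beta_{p+q} = \theta_p \rho_q \neq 0$ (since $\theta_p \neq 0$ and $\rho_q \neq 0$ by the causality of $\cB$ of true order $q$), so the process is genuinely of order $p+q$.

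Finally, ergodicity follows immediately from this causal representation: $(V_t)$ is an i.i.d.\ (or at least strictly stationary ergodic white noise) sequence, and $(Y_t)$ is a measurable functional of its past, hence strictly stationary and ergodic as the image of an ergodic process under a time-invariant measurable map. The main subtlety is purely bookkeeping in the polynomial product to match the closed form \eqref{Beta}; the rest is a direct invocation of classical results on causal AR processes.
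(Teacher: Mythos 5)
Your proposal is correct and follows essentially the same route as the paper: apply $\cB(L)$ to $\cA(L)Y_t = Z_t$ to get $\cB(L)\cA(L)Y_t = V_t$, identify $\beta$ from the polynomial product, deduce causality of the product polynomial and the absolutely summable MA($\infty$) representation, and conclude ergodicity because $(Y_t)$ is a time-invariant measurable (in the paper, explicitly Lipschitz, via Stout's theorem) functional of the ergodic noise $(V_t)$. The only cosmetic difference is that the paper spells out the continuity of the map $(x_0,x_1,\ldots)\mapsto\sum_k \psi_k x_k$ before invoking the ergodicity-transfer theorem, a detail you assert rather than verify.
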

\begin{proof}
See Appendix.
\end{proof}

\noindent From the causality of the autoregressive polynomial, we deduce that $(Y_{t})$ is stationary. Accordingly, let
\begin{equation}
\label{StatL0}
\ell_0 = \dV(Y_1) > 0
\end{equation}
be the variance of the process, positive as soon as $\dE[V_1^2] = \sigma^2 > 0$, and, for all $h \in \dN^{*}$, let
\begin{equation}
\label{StatLh}
\ell_{h} = \cov(Y_1, Y_{1+h})
\end{equation}
be the successive covariances. Denote by $\Delta_{h}$ the associated Toeplitz covariance matrix of order $h$, that is
\begin{equation}
\label{Delta}
\Delta_{h} = \begin{pmatrix}
\ell_0 & \ell_1 & \hdots & \ell_{h-1} \\
\ell_1 & \ddots & \ddots & \vdots \\
\vdots & \ddots & \ddots & \ell_1 \\
\ell_{h-1} & \hdots & \ell_1 & \ell_0
\end{pmatrix}.
\end{equation}

\begin{lem}
\label{LemInvD}
Under the causality assumptions on $\cA$ and $\cB$, the Toeplitz matrix $\Delta_{h}$ is positive definite, for all $h \geq 1$.
\end{lem}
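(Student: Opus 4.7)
The plan is to argue by contradiction using the causal MA($\infty$) representation of $(Y_t)$ furnished by Lemma \ref{LemCausal}. First I would record that $\Delta_h$, being the covariance matrix of $(Y_1,\ldots,Y_h)'$, is automatically symmetric and positive semi-definite, so only non-singularity remains to be shown. Assume therefore that there exists a nonzero vector $a=(a_1,\ldots,a_h)'\in\dR^h$ with $a'\Delta_h a=0$. Then $\dV\bigl(\sum_{k=1}^h a_k Y_k\bigr)=0$, so $\sum_{k=1}^h a_k Y_k$ is almost surely equal to a deterministic constant.

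Next, since $\cA$ and $\cB$ are causal, so is the polynomial $\cA\cB$ driving the AR($p+q$) representation of Lemma \ref{LemCausal}, and hence $(Y_t)$ admits a one-sided expansion
\begin{equation*}
Y_t=\sum_{j=0}^{\infty}\psi_j\, V_{t-j},\hsp \psi_0=1,\hsp \sum_{j\geq 0}|\psi_j|<\infty,
\end{equation*}
with $\psi_0=1$ coming from $\cA(0)\cB(0)=1$. Substituting in $\sum_{k=1}^h a_k Y_k$ and reindexing by $j=k-i$ yields
\begin{equation*}
\sum_{k=1}^h a_k Y_k=\sum_{j\in\dZ} c_j\, V_j,\hsp c_j=\sum_{k=\max(1,j)}^{h} a_k\, \psi_{k-j},
\end{equation*}
with $c_j=0$ for $j>h$. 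By independence of the $V_t$'s, the vanishing variance forces $\sigma^2\sum_{j\in\dZ} c_j^2=0$, that is $c_j=0$ for every $j$.

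I would then extract the $a_k$'s by reading the relations $c_j=0$ from $j=h$ downward: $c_h=a_h\psi_0=a_h$ gives $a_h=0$; then $c_{h-1}=a_{h-1}\psi_0+a_h\psi_1=a_{h-1}$ gives $a_{h-1}=0$; and a straightforward backward induction (the linear system is lower-triangular with unit diagonal) yields $a_k=0$ for $k=h,h-1,\ldots,1$, contradicting $a\neq 0$. Hence no such $a$ exists and $\Delta_h$ is positive definite.

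The argument is essentially painless; the only delicate step is the MA($\infty$) expansion with $\psi_0=1$ and summable coefficients, which rests entirely on the causality of $\cA\cB$ already exploited in Lemma \ref{LemCausal}. An equivalent and arguably cleaner proof uses the spectral representation $a'\Delta_h a=\int_{-\pi}^{\pi}\bigl|\sum_{k=1}^h a_k\, \de^{\di k\lambda}\bigr|^2 f(\lambda)\, \dd\lambda$ with $f(\lambda)=\sigma^2/(2\pi|\cA(\de^{\di\lambda})\cB(\de^{\di\lambda})|^2)$ strictly positive on $[-\pi,\pi]$, forcing the trigonometric polynomial $\sum a_k\de^{\di k\lambda}$ to vanish identically and hence $a=0$; either presentation would do.
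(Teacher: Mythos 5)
Your argument is correct, but it takes a different route from the paper. The paper works entirely on the spectral side: it writes the spectral density $f_{Y}$ of the causal AR($p+q$) process, observes that $\Delta_{h}$ is the Toeplitz matrix of the Fourier coefficients of $f_{Y}$, and invokes Proposition 4.5.3 of Brockwell--Davis (Grenander--Szeg\H{o}) to get the quantitative bound $\lambda_{\min}(\Delta_{h}) \geq 2\pi \inf_{x \in \dT} f_{Y}(x) > 0$, uniformly in $h$. You instead run a direct degeneracy argument: if $a^{\prime}\Delta_{h}a = 0$ then $\sum_{k} a_{k} Y_{k}$ is a.s.\ constant, and the causal MA($\infty$) expansion with $\psi_0 = 1$ turns this into a lower-triangular unit-diagonal linear system forcing $a = 0$. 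This is more elementary and self-contained (no Toeplitz eigenvalue theory needed), at the cost of losing the uniform-in-$h$ lower bound on $\lambda_{\min}(\Delta_{h})$ that the paper's version delivers for free; the spectral variant you sketch at the end is essentially the paper's proof. One small point of hygiene: the paper only assumes $(V_{t})$ is a white noise, not an i.i.d.\ sequence, so where you say ``by independence of the $V_{t}$'s'' you should say ``by uncorrelatedness'' --- that is all the identity $\dV\bigl(\sum_{j} c_{j} V_{j}\bigr) = \sigma^2 \sum_{j} c_{j}^2$ requires, and the $L^2$ convergence of the series is guaranteed by $\sum_{j}\vert \psi_{j}\vert < \infty$, so nothing breaks.
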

\begin{proof}
See Appendix.
\end{proof}

\noindent The invertibility of $\Delta_{h}$ implies that, in the particular case where $h=p+q+1$, the Yule-Walker equations have a unique solution giving the expression of $\beta$ and $\sigma^2$ according to $\ell_0, \hdots, \ell_{p+q}$. 	Conversely, one can see that the linear system
\begin{equation*}
B\, \Lambda_0^{p+q+1} = U
\end{equation*}
where the square matrix $B$ of order $p+q+1$ is given by
\begin{equation*}
B =
\begin{pmatrix}
1 & -\beta_1 & -\beta_2 & \hdots & \hdots & -\beta_{p+q-2} & -\beta_{p+q-1} & -\beta_{p+q} \\
-\beta_1 & 1-\beta_2 & -\beta_3 & \hdots & \hdots & -\beta_{p+q-1} & -\beta_{p+q} & 0 \\
-\beta_2 & -\beta_1-\beta_3 & 1-\beta_4 & \hdots & \hdots & -\beta_{p+q} & 0 & 0\\
\vdots & \vdots & \vdots & & & \vdots & \vdots & \vdots\\
\vdots & \vdots & \vdots & & & \vdots & \vdots & \vdots\\
-\beta_{p+q-1} & -\beta_{p+q-2}-\beta_{p+q} & -\beta_{p+q-3} & \hdots & \hdots & -\beta_1 & 1 & 0\\
-\beta_{p+q} & -\beta_{p+q-1} & -\beta_{p+q-2} & \hdots & \hdots & -\beta_2 & -\beta_1 & 1\\
\end{pmatrix}
\end{equation*}
and the vectors $\Lambda_0^{p+q+1}$ and $U$ of order $p+q+1$ are defined as
\begin{equation*}
\Lambda_0^{p+q+1} = \begin{pmatrix}
\ell_0 & \ell_1 & \hdots & \ell_{p+q}
\end{pmatrix}^{\prime} \hsp \text{and} \hsp U = \begin{pmatrix}
\sigma^2 & 0 & \hdots & 0
\end{pmatrix}^{\prime},
\end{equation*}
has the unique solution given by
\begin{equation}
\label{VecCov}
\Lambda_0^{p+q+1} = B^{-1} U.
\end{equation}
This enables to express $\ell_0, \hdots, \ell_{p+q}$ in terms of $\beta$ and $\sigma^2$. Let us conclude this part by introducing some more notations that will be useful thereafter. In all the sequel, we decompose $\beta$ into
\begin{equation}
\label{AlphaGamma}
\alpha = \begin{pmatrix} \beta_1 & \hdots & \beta_{p} \end{pmatrix}^{\prime} \hsp \text{and} \hsp \gamma = \begin{pmatrix} \beta_{p+1} & \hdots & \beta_{p+q} \end{pmatrix}^{\prime}.
\end{equation}
The Toeplitz matrix $C_{\alpha}$ and the Hankel matrix $C_{\gamma}$ of order $q \times q$ are given by
\begin{equation}
\label{CaCg}
C_{\alpha} = \begin{pmatrix}
0 & \hdots & \hdots & \hdots & 0 \vspace{-0.1cm} \\
\alpha_1 & \ddots & & & \vdots \vspace{-0.1cm} \\
\alpha_2 & \alpha_1 & \ddots & & \vdots \\
\vdots & \ddots & \ddots & \ddots & \vdots \\
\alpha_{q-1} & \hdots & \alpha_2 & \alpha_1 & 0
\end{pmatrix} \hsp \text{and} \hsp C_{\gamma} = \begin{pmatrix}
\gamma_1 & \gamma_2 & \hdots & \hdots & \gamma_{q} \\
\gamma_2 & & & \gamma_{q} & 0 \\
\vdots & & \udots & \udots & \vdots \\
\vdots & \udots & \udots & & \vdots \\
\gamma_{q} & 0 & \hdots & \hdots & 0
\end{pmatrix}.
\end{equation}
Note that $C_{\alpha}$ is only well-defined for the usual case $p \geq q$ and is generated by $(0, \alpha_1, \hdots, \alpha_{q-1})$. If $p < q$, it shall be replaced by the Toeplitz matrix $G_{\alpha}$ of order $q \times q$ generated by $(0, \alpha_1, \hdots, \alpha_{p}, \gamma_1, \hdots, \gamma_{q-p-1})$. Likewise, the Hankel matrix $D$ of order $p \times q$ is given by
\begin{equation}
\label{D}
D = \begin{pmatrix}
\alpha_1 & \alpha_2 & \hdots & \hdots & \alpha_{q} \\
\alpha_2 & & & \udots & \vdots \\
\vdots & & \udots & & \alpha_{p} \vspace{-0.1cm} \\
\vdots & \udots & & \udots & \gamma_1 \vspace{0.1cm} \\
\alpha_{q} & & \udots & \udots & \gamma_2 \\
\vdots & \udots & \udots & \udots & \vdots \\
\alpha_{p} & \gamma_1 & \gamma_2 & \hdots & \gamma_{q-1} \\
\end{pmatrix}
\end{equation}
and is well-defined only for $p \geq q$, whereas it shall be replaced for $p < q$ by the Hankel matrix $E$ of order $p \times q$ given by
\begin{equation}
\label{E}
E = \begin{pmatrix}
\alpha_1 & \hdots & \hdots & \alpha_{p} & \gamma_1 & \hdots & \gamma_{q-p} \\
\vdots & & \udots & \udots & & \udots & \vdots \\
\vdots & \udots & \udots  & & \udots & & \vdots \\
\alpha_{p} & \gamma_1 & \hdots & \gamma_{q-p} & \hdots & \hdots & \gamma_{q-1}
\end{pmatrix}.
\end{equation}


\section{On the behavior of the least squares estimator of $\theta$}
\label{SecEstT}


Consider an observed path $(Y_{t}(\omega))$ of the process \eqref{ModAR} on $\{ 1 \leq t \leq n \}$ and, to lighten all calculations, having initial values $Y_{-(p+q)}, \hdots, Y_0$ set to zero. The associated process $(Y_{n})$ is therefore totally described by the $\sigma$--algebra $\cF_{n} = \sigma(V_1, \hdots, V_{n})$. We obviously infer from the last section that $(Y_{n})$ is asymptotically stationary and that it satisfies, for all $h \in \{1, \hdots, p+q \}$,
\begin{equation*}
\limn \dV(Y_{n}) = \ell_0 > 0 \hsp \text{and} \hsp \limn \cov(Y_{n}, Y_{n+h}) = \ell_{h}
\end{equation*}
where $\ell_0, \hdots, \ell_{p+q}$ are given in \eqref{VecCov}. Now, denote by $P_{n}$ the matrix of order $p \times q$ given, for all $n \geq 1$, by
\begin{equation}
\label{Pn}
P_{n} = \sum_{t=1}^{n} \begin{pmatrix} \Phi_{t-1}^{p} Y_{t} & \Phi_{t-2}^{p} Y_{t} & \hdots & \Phi_{t-q}^{p} Y_{t}
\end{pmatrix}
\end{equation}
and by $S_{n}$ the square matrix of order $p$ given by
\begin{equation}
\label{Sn}
S_{n} = \sum_{t=0}^{n} \Phi_{t}^{p}\, \Phi_{t}^{p\,\, \prime} + S
\end{equation}
where $\Phi_{t}^{p}$ is described in \eqref{PhiPsi} and $S$ is a symmetric and positive definite matrix of order $p$ added to avoid a useless invertibility assumption on $S_{n}$. Let also
\begin{equation}
\label{Tn}
T_{n} = S_{n-1}^{-1}\, P_{n}.
\end{equation}
We shall start by studying the asymptotic behavior of $T_{n}$. As a matter of fact, we will see in the sequel that the least squares estimator of $\theta$ in \eqref{ModARGen} is closely related to $T_{n}$. Denote by $\Pi_{pq}$ the Hankel matrix of order $p \times q$ given by
\begin{equation}
\label{Pi}
\Pi_{pq} = \begin{pmatrix}
\ell_1 & \ell_2 & \hdots & \ell_{q} \\
\ell_2 & \ell_3 & \hdots & \ell_{q+1} \\
\vdots & \vdots & & \vdots \\
\ell_{p} & \ell_{p+1} & \hdots & \ell_{p+q}
\end{pmatrix}
\end{equation}
and by $K$ the matrix of order $p\,q \times p\,q$ given by
\begin{equation}
\label{K}
K = \left\{
\begin{array}[c]{ll}
(I_{q} - C_{\alpha}) \otimes I_{p}  -  C_{\gamma} \otimes J_{p} & \hsp \text{for } p \geq q\\
(I_{q} - G_{\alpha})\hspace{-0.02cm} \otimes I_{p}  -  C_{\gamma} \otimes J_{p} & \hsp \text{for } p < q
\end{array}
\right.
\end{equation}
where $C_{\alpha}$, $G_{\alpha}$ and $C_{\gamma}$ are defined in \eqref{CaCg}.

\begin{prop}
\label{PropCvgTn}
Under the causality assumptions on $\cA$ and $\cB$ and as soon as $\dE[V_1^2] = \sigma^2 < \infty$, we have the almost sure convergence
\begin{equation*}
\limn T_{n} = T^{*} \cvgas
\end{equation*}
where the limiting value is given by
\begin{equation}
\label{TLim}
T^{*} = \Delta_{p}^{-1}\, \Pi_{pq}
\end{equation}
in which $\Delta_{p}$ and $\Pi_{pq}$ are given in \eqref{Delta} and \eqref{Pi}, respectively. \textcolor{black}{Under the additional assumption that $K$ is invertible,} we have the almost sure convergence
\begin{equation*}
\limn \vec(T_{n}) = \left\{
\begin{array}[c]{ll}
K^{-1}\, \vec(D) \cvgas & \hsp \text{for } p \geq q\\
K^{-1}\: \vec(E) \cvgas & \hsp \text{for } p < q
\end{array}
\right.
\end{equation*}
where $D$ and $E$ are given in \eqref{D} and \eqref{E}.
\end{prop}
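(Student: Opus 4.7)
The plan is to handle the numerator $P_n$ and the denominator $S_{n-1}$ separately via an ergodic theorem, and then derive the vectorized formula through the Yule-Walker relations of the AR$(p+q)$ representation supplied by Lemma \ref{LemCausal}. Viewing $(Y_t)$ as a causal ergodic AR$(p+q)$ process, the classical ergodic theorem applies to the stationary version of $(\Phi_t^{p})$; the zero initial conditions only perturb the relevant sample averages by a geometrically decaying term, thanks to causality. Matching entries, this yields
\begin{equation*}
\frac{S_n}{n} \longrightarrow \Delta_p \hsp \text{and} \hsp \frac{P_n}{n} \longrightarrow \Pi_{pq} \cvgas,
\end{equation*}
with the deterministic correction $S$ vanishing in the limit. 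Lemma \ref{LemInvD} provides the invertibility of $\Delta_p$, so continuity of matrix inversion and product yields $T_n \to \Delta_p^{-1}\, \Pi_{pq} = T^{*}$ almost surely.

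For the vectorized formula, I would show that $T^{*}$ satisfies the Sylvester-type matrix equation
\begin{equation*}
T^{*} - T^{*} C_\alpha^{\,\prime} - J_p\, T^{*} C_\gamma^{\,\prime} = D
\end{equation*}
when $p \geq q$, with $C_\alpha$ replaced by $G_\alpha$ and $D$ by $E$ when $p < q$. The Kronecker identity $(A \otimes B)\, \vec(X) = \vec(B X A^{\prime})$ then converts this into $K\, \vec(T^{*}) = \vec(D)$ (resp. $\vec(E)$), from which, assuming the invertibility of $K$, the stated formula follows at once. The matrix equation itself is obtained from the Yule-Walker relations
\begin{equation*}
\ell_h = \sum_{i=1}^{p+q} \beta_i\, \ell_{h-i}, \hsp h \geq 1,
\end{equation*}
with the usual convention $\ell_{-k} = \ell_k$, applied column by column to $T^{*} = \Delta_p^{-1}\, \Pi_{pq}$ after splitting $\beta$ into its $\alpha$- and $\gamma$-parts.

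The main obstacle I anticipate is this last bookkeeping step: one must align the Yule-Walker indices with the precise entry positions so that the $\alpha$-part folds into the Toeplitz block $C_\alpha$ (or $G_\alpha$), the $\gamma$-part folds, via the reversed indexing reflected by $J_p$, into the Hankel block $C_\gamma$, and the inhomogeneity collapses exactly to $D$ (or $E$). The regime $p < q$ is delicate since the Toeplitz band must be extended using coefficients of $\gamma$ to form $G_\alpha$. The invertibility of $K$ would then be deduced either from the uniqueness of $T^{*}$ as the solution of the resulting linear system or from a separate structural argument exploiting the causality of $\cB$.
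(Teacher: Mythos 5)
Your proposal is correct and reaches the same Sylvester/Kronecker endpoint as the paper, but it gets there by a slightly different and arguably more economical route. For the first convergence you and the paper agree: Lemma \ref{LemCvgSumY2} (ergodicity of the causal AR($p+q$) representation) gives $S_n/n \to \Delta_p$ and $P_n/n \to \Pi_{pq}$ entrywise, and Lemma \ref{LemInvD} lets you invert in the limit. The difference is in how the Sylvester equation $T^{*}(I_q - C_\alpha^{\,\prime}) - J_p T^{*} C_\gamma = D$ is established. The paper derives a finite-$n$ identity $P_n = S_{n-1}D + P_n C_\alpha^{\,\prime} + J_p P_n C_\gamma + M_n + \xi_n$ by expanding each column $\sum_t \Phi_{t-h}^{p}Y_t$ with $Y_t = \alpha^{\,\prime}\Phi_{t-1}^{p} + \gamma^{\,\prime}\Phi_{t-p-1}^{q} + V_t$, and then kills the martingale term $M_n$ with the strong law of large numbers for vector martingales before passing to the limit; you instead verify the limiting equation deterministically from the Yule--Walker recursions $\ell_h = \sum_i \beta_i \ell_{h-i}$ applied to $T^{*}=\Delta_p^{-1}\Pi_{pq}$. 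These are the same index bookkeeping in different clothes (divide the paper's expansions \eqref{EstTDev1}--\eqref{EstTDev8} by $n$ and pass to the limit and you recover exactly your covariance identities), but your version needs no martingale SLLN for this proposition; the paper's finite-$n$ decomposition pays for itself later, since $M_n$ is reused for the central limit theorem in Proposition \ref{PropTlcTn}. Two minor remarks: your $C_\gamma^{\,\prime}$ versus the paper's $C_\gamma$ is immaterial since the Hankel matrix $C_\gamma$ in \eqref{CaCg} is symmetric, and to commute $J_p$ past $\Delta_p^{-1}$ when multiplying the equation through by $\Delta_p$ you should invoke the persymmetry $J_p\Delta_p J_p = \Delta_p$ of the Toeplitz covariance matrix. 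On the invertibility of $K$ you are no worse off than the paper, which itself only stipulates it after noting that the pencil condition $\det(C_\gamma - \lambda(I_q - C_\alpha^{\,\prime}))\neq 0$ for $\vert\lambda\vert = 1$ is too complicated to check in general.
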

\begin{proof}
See Appendix.
\end{proof}

\noindent Of course, we have
\begin{equation*}
\vec(T^{*}) = K^{-1}\, \vec(D) \hsp \text{or} \hsp \vec(T^{*}) = K^{-1}\, \vec(E).
\end{equation*}
However, the latter expressions seem more elegant than \eqref{TLim} in the sense that they only depend on $\theta$ and $\rho$, and do not require the computation of $\ell_0, \hdots, \ell_{p+q}$. In addition, denote by $\theta^{*}$ the first column of $T^{*}$, that is
\begin{equation}
\label{EstTLim}
T^{*} = \begin{pmatrix}
\theta^{*} & * & \hdots & *
\end{pmatrix}
\end{equation}
which can also be seen as the first $p$ elements of $\vec(T^{*})$. We are now going to study the asymptotic behavior of the least squares estimator of $\theta$ in \eqref{ModARGen}, that is
\begin{equation}
\label{EstT}
\wht_{n} = S_{n-1}^{-1}\, \sum_{t=1}^{n} \Phi_{t-1}^{p}\, Y_{t}.
\end{equation}

\noindent We assume here that a statistical argument (such as the empirical autocorrelation function) has suggested an autoregression of order $p$, at least.

\begin{thm}
\label{ThmCvgT}
Under the causality assumptions on $\cA$ and $\cB$ and as soon as $\dE[V_1^2] = \sigma^2 < \infty$, we have the almost sure convergence
\begin{equation*}
\limn \wht_{n} = \theta^{*} \cvgas
\end{equation*}
where the limiting value $\theta^{*}$ is given by \eqref{EstTLim}.
\end{thm}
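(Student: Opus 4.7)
The plan is to recognize that $\wht_{n}$ is literally the first column of the matrix $T_{n}$ whose asymptotic behavior was established in Proposition \ref{PropCvgTn}. Indeed, by inspection of the definition \eqref{Pn}, the first column of $P_{n}$ is exactly $\sum_{t=1}^{n} \Phi_{t-1}^{p}\, Y_{t}$, which is the random vector appearing in \eqref{EstT}. Left-multiplying by $S_{n-1}^{-1}$, one obtains
\begin{equation*}
\wht_{n} \;=\; S_{n-1}^{-1}\, \bigl(\text{first column of } P_{n}\bigr) \;=\; \text{first column of } T_{n}.
\end{equation*}

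Once this identification is made, the conclusion is immediate. The almost sure convergence $T_{n} \to T^{*}$ from Proposition \ref{PropCvgTn} is preserved by the continuous linear operation of extracting the first column, and the first column of $T^{*}$ is by definition $\theta^{*}$ through \eqref{EstTLim}. Hence $\wht_{n} \to \theta^{*}$ almost surely.

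There is essentially no obstacle left at this stage, since all the work has been absorbed into Proposition \ref{PropCvgTn}. That proposition is where the technical content lives: one applies the ergodic theorem to the causal AR($p+q$) representation of Lemma \ref{LemCausal} to show that $\frac{1}{n} S_{n-1}$ converges a.s.\ to $\Delta_{p}$ (invertible by Lemma \ref{LemInvD}) and that $\frac{1}{n} P_{n}$ converges a.s.\ to $\Pi_{pq}$, then one identifies $\Delta_{p}^{-1}\, \Pi_{pq}$ with $\vec^{-1}(K^{-1} \vec(D))$ via the Yule--Walker equations and the linear system \eqref{VecCov} derived in Section 2. The present theorem is just the natural first-column specialization of that statement.
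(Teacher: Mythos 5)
Your proof is correct and follows exactly the paper's own argument: the paper likewise observes that $\wht_{n}$ is the first column of $T_{n}$ and invokes Proposition \ref{PropCvgTn} to conclude. The additional summary of the content of that proposition is accurate but not needed for this step.
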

\begin{proof}
\textcolor{black}{It is a corollary of Proposition \ref{PropCvgTn}.}
\end{proof}

\noindent \textcolor{black}{Note that, strictly speaking, the invertibility of $K$ is not needed for the last result, in virtue of \eqref{TLim}.} Our next result is related to the asymptotic normality of $T_{n}$, and shall lead to the asymptotic normality of $\wht_{n}$. For all $h, k \in \{0, \hdots, q \}$, let
\begin{equation}
\label{Gammahk}
\Gamma_{h-k} = \begin{pmatrix}
\ell_{h-k} & \hdots & \ell_{h-k-p+1} \\
\vdots & \ddots & \vdots \\
\ell_{h-k+p-1} & \hdots & \ell_{h-k}
\end{pmatrix}
\end{equation}
and note that $\Gamma_0 = \Delta_{p}$ and that $\Gamma_{h-k} = \Gamma_{k-h}^{\, \prime}$. Hence, consider the symmetric matrix of order $p\, q \times p\, q$ given by
\begin{equation}
\label{Gamma}
\Gamma_{\!pq} = \begin{pmatrix}
\Delta_{p} & \Gamma_1^{\, \prime} & \hdots & \Gamma_{q-1}^{\, \prime} \\
\Gamma_1 & \Delta_{p} & \hdots & \Gamma_{q-2}^{\, \prime} \\
\vdots & \vdots & \ddots & \vdots \\
\Gamma_{q-1} & \Gamma_{q-2} & \hdots & \Delta_{p}
\end{pmatrix}.
\end{equation}

\begin{prop}
\label{PropTlcTn}
Under the causality assumptions on $\cA$ and $\cB$, \textcolor{black}{if we assume that $K$ is invertible} and that $\dE[V_1^4] = \tau^4 < \infty$, then we have the asymptotic normality
\begin{equation*}
\sqrt{n}\, \big( \vec(T_{n}) - \vec(T^{*}) \big) \liml \cN\big( 0 , \Sigma_{T} \big)
\end{equation*}
where the limiting covariance is given by
\begin{equation}
\label{TCov}
\Sigma_{T} = \sigma^2\, K^{-1}\, (I_{q} \otimes \Delta_{p}^{-1})\, \Gamma_{\!pq}\, (I_{q} \otimes \Delta_{p}^{-1})\, K^{\, \prime\, -1}
\end{equation}
in which $\Delta_{p}$ and $\Gamma_{\!pq}$ are given in \eqref{Delta}, \eqref{Gamma}, and $K$ is given in \eqref{K}.
\end{prop}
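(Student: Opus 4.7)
The plan is to reduce the statement to a martingale central limit theorem combined with Slutsky's lemma. Starting from the factorization
\begin{equation*}
\sqrt{n}\bigl(\vec(T_n) - \vec(T^*)\bigr) \;=\; \bigl(I_q \otimes (S_{n-1}/n)^{-1}\bigr) \, \frac{1}{\sqrt{n}}\,\vec\bigl(P_n - S_{n-1}T^*\bigr),
\end{equation*}
the ergodicity of $(Y_t)$ provided by Lemma \ref{LemCausal} and the positive-definiteness of $\Delta_p$ from Lemma \ref{LemInvD} give $n^{-1}S_{n-1} \to \Delta_p$ almost surely, so the problem reduces to establishing a CLT for $n^{-1/2}\vec(P_n - S_{n-1}T^*)$ and then transporting its limiting covariance by $I_q \otimes \Delta_p^{-1}$ on both sides.

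To expose a genuine martingale, I would substitute the AR$(p+q)$ representation $Y_t = \beta^{\,\prime}\Phi_{t-1}^{p+q}+V_t$ from Lemma \ref{LemCausal} into each column $\sum_{t=1}^n\Phi_{t-k}^p Y_t$ of $P_n$. Since $\Phi_{t-k}^p$ is $\cF_{t-1}$--measurable for $k\geq 1$, the sum splits into a drift $\sum_{t=1}^n \Phi_{t-k}^p\Phi_{t-1}^{p+q\,\prime}\beta$ and a martingale part $\sum_{t=1}^n\Phi_{t-k}^p V_t$. Concatenating the $q$ martingale columns yields the $pq$--dimensional martingale
\begin{equation*}
M_n \;=\; \sum_{t=1}^n \Phi_{t-1}^{pq}\,V_t, \hsp \Phi_{t-1}^{pq}\;=\;\bigl(\Phi_{t-1}^{p\,\prime},\,\Phi_{t-2}^{p\,\prime},\,\hdots,\,\Phi_{t-q}^{p\,\prime}\bigr)^{\prime}.
\end{equation*}
The block identity $\dE[\Phi_{t-k}^p\,\Phi_{t-l}^{p\,\prime}]=\Gamma_{k-l}$ combined with $\Gamma_{-m}=\Gamma_m^{\,\prime}$ shows that the stationary covariance of $\Phi_{t-1}^{pq}$ is exactly $\Gamma_{\!pq}$ from \eqref{Gamma}, so by the ergodic theorem $\langle M\rangle_n/n \to \sigma^2 \Gamma_{\!pq}$ almost surely. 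The fourth-moment assumption $\dE[V_1^4]=\tau^4<\infty$ suffices for Lindeberg's condition, and the vector martingale CLT of Hall and Heyde \cite{HallHeyde80} yields $n^{-1/2}M_n \liml \cN(0,\sigma^2\Gamma_{\!pq})$.

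The remaining step is algebraic: matching the drift part of $P_n$ against $S_{n-1}T^*$. Expanding $\beta^{\,\prime}\Phi_{t-1}^{p+q}=\sum_{j=1}^{p+q}\beta_j Y_{t-j}$, shifting indices, and using the ergodic limits $\frac{1}{n}\sum_t Y_{t-a}Y_{t-b} \to \ell_{|a-b|}$ together with the recursion \eqref{Beta} for $\beta_k$, each column of the drift can be rewritten, modulo a $o(\sqrt n)$ residue, as a linear combination of columns of $S_{n-1}T_n$ whose coefficients are exactly the entries $\alpha_i$ and $\gamma_j$ of \eqref{AlphaGamma}. After vectorization this linear recursion is encoded by the matrix $K$ of \eqref{K} (with $G_\alpha,E$ replacing $C_\alpha,D$ when $p<q$, cf.\ Proposition \ref{PropCvgTn}), yielding the implicit identity
\begin{equation*}
\frac{1}{\sqrt n}\,\bigl(I_q\otimes S_{n-1}\bigr)\,K\,\bigl(\vec(T_n)-\vec(T^*)\bigr) \;=\; \frac{1}{\sqrt n}\,\vec(M_n)\;+\;o(1)\cvgas
\end{equation*}
Inverting $K$ and then $I_q\otimes S_{n-1}/n$ before applying Slutsky delivers $\sqrt n(\vec(T_n)-\vec(T^*)) \liml \cN(0,\Sigma_T)$ with the sandwiched covariance \eqref{TCov}; the outer $K^{-1}$ and $K^{\,\prime\,-1}$ come from this final inversion, and the inner $I_q\otimes\Delta_p^{-1}$ factors from inverting $I_q\otimes S_{n-1}/n$.

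I expect the main obstacle to be this algebraic identification of the drift with $K\vec(T_n-T^*)$: it demands careful bookkeeping of the index shifts produced by $\beta^{\,\prime}\Phi_{t-1}^{p+q}$, the absorption of $O(1)$ boundary terms arising from the summation bounds and the zero initialization, and a separate treatment of the two regimes $p\geq q$ and $p<q$ to accommodate the substitutions $C_\alpha\leftrightarrow G_\alpha$ and $D\leftrightarrow E$. By contrast the martingale CLT, the ergodic computation of $\langle M\rangle_n/n$, and the verification of Lindeberg's condition are routine and follow the pattern of Bercu and Pro\"ia \cite{BercuProia13,Proia13} for the case $q=1$.
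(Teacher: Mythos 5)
Your proposal is correct and follows essentially the same route as the paper: the same martingale $\sum_t(\Phi_{t-1}^{p\,\prime},\hdots,\Phi_{t-q}^{p\,\prime})^{\prime}V_t$ with quadratic variation converging to $\sigma^2\Gamma_{\!pq}$, the same Lindeberg check via the fourth moment, and the same identification of the drift through the generalized Sylvester equation encoded by $K$ (which the paper establishes once in the proof of Proposition \ref{PropCvgTn} and simply reuses here), followed by Slutsky. The only caveat is that the ``$O(1)$ boundary terms'' you mention are finitely many products $Y_aY_b$ that are not bounded but only $o(\sqrt{n})$ almost surely, which is exactly where the fourth-moment assumption is needed for the residual to be negligible at the $\sqrt{n}$ scale.
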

\begin{proof}
See Appendix.
\end{proof}

\noindent Note that $\Sigma_{T}$ does not depend on $\sigma^2$, despite appearances. Indeed, there is a factor $\sigma^{-2}$ in $\Delta_{p}^{-1}$ and a factor $\sigma^2$ in $\Gamma_{\!pq}$. In addition, denote by $\Sigma_{\theta}$ the top left-hand block matrix of order $p$ in $\Sigma_{T}$, that is
\begin{equation}
\label{EstTCov}
\Sigma_{T} = \begin{pmatrix}
\Sigma_{\theta} & * & \hdots & * \\
* & & & \vdots \\
\vdots & & & \vdots \\
* & \hdots & \hdots & *
\end{pmatrix}.
\end{equation}
\noindent It follows that we have the following result on $\wh{\theta}_{n}$.

\begin{thm}
\label{ThmTlcT}
Under the causality assumptions on $\cA$ and $\cB$, \textcolor{black}{if we assume that $K$ is invertible} and that $\dE[V_1^4] = \tau^4 < \infty$, then we have the asymptotic normality
\begin{equation*}
\sqrt{n}\, \big( \wh{\theta}_{n} - \theta^{*} \big) \liml \cN\big( 0 , \Sigma_{\theta} \big)
\end{equation*}
where the limiting covariance $\Sigma_{\theta}$ is given by \eqref{EstTCov}.
\end{thm}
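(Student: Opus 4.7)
The plan is essentially to observe that Theorem \ref{ThmTlcT} follows from Proposition \ref{PropTlcTn} by a linear projection onto the first $p$ coordinates.

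First, I would note from the definition \eqref{EstT} of $\wht_{n}$ together with \eqref{Tn} and \eqref{Pn} that the least squares estimator $\wht_{n}$ is exactly the first column of the $p \times q$ matrix $T_{n}$; equivalently, $\wht_{n}$ is the vector formed by the first $p$ entries of $\vec(T_{n})$. Likewise, by \eqref{EstTLim}, $\theta^{*}$ is the vector of the first $p$ entries of $\vec(T^{*})$. I would then introduce the $p \times p\, q$ selection matrix $M = \bigl( I_{p}\, \, 0_{p \times p(q-1)} \bigr)$, so that the preceding identifications read $\wht_{n} = M\, \vec(T_{n})$ and $\theta^{*} = M\, \vec(T^{*})$, whence
\[
\sqrt{n}\, \bigl( \wht_{n} - \theta^{*} \bigr) \, = \, M\, \sqrt{n}\, \bigl( \vec(T_{n}) - \vec(T^{*}) \bigr).
\]

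Then I would invoke Proposition \ref{PropTlcTn} together with the standard fact that linear images preserve weak convergence to a Gaussian limit; applied to the map $M$, this yields
\[
\sqrt{n}\, \bigl( \wht_{n} - \theta^{*} \bigr) \liml \cN\bigl( 0\, , \, M\, \Sigma_{T}\, M^{\, \prime} \bigr).
\]
Since by the block decomposition \eqref{EstTCov} the matrix $\Sigma_{\theta}$ is precisely the top left-hand block of order $p$ of $\Sigma_{T}$, one has $M\, \Sigma_{T}\, M^{\, \prime} = \Sigma_{\theta}$, which closes the argument. There is no substantive obstacle here: the whole analytical content is contained in Proposition \ref{PropTlcTn}, and the theorem is a purely algebraic corollary obtained by selecting the relevant sub-block of the limiting covariance.
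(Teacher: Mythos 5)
Your proposal is correct and matches the paper's own argument, which likewise deduces the theorem from Proposition \ref{PropTlcTn} by noting that $\wht_{n}$ is the first column of $T_{n}$; you merely make explicit the selection matrix and the preservation of Gaussian limits under linear maps. No issues.
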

\begin{proof}
\textcolor{black}{It is a corollary of Proposition \ref{PropTlcTn}.}
\end{proof}

\noindent Let us now establish somes rates of almost sure convergence.

\begin{prop}
\label{PropRatTn}
Under the causality assumptions on $\cA$ and $\cB$, \textcolor{black}{if we assume that $K$ is invertible} and that $\dE[V_1^4] = \tau^4 < \infty$, then we have the quadratic strong law
\begin{equation*}
\limn \frac{1}{\log n} \sum_{t=1}^{n} \big( \vec(T_{t}) - \vec(T^{*}) \big) \big( \vec(T_{t}) - \vec(T^{*}) \big)^{\prime} = \Sigma_{T} \cvgas
\end{equation*}
where the limiting value $\Sigma_{T}$ is given by \eqref{TCov}. In addition, we also have the law of iterated logarithm
\begin{equation*}
\limsup_{n\, \rightarrow\, \infty} \left( \frac{n}{2 \log \log n} \right)\, \big( \vec(T_{n}) - \vec(T^{*}) \big) \big( \vec(T_{n}) - \vec(T^{*}) \big)^{\prime} = \Sigma_{T} \cvgas
\end{equation*}
\end{prop}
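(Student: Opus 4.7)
\medskip
\noindent \textbf{Proof proposal.} The plan is to upgrade the central limit theorem of Proposition \ref{PropTlcTn} into second-order almost sure results by exploiting the very same martingale decomposition that underlies its proof. Since both the quadratic strong law and the law of iterated logarithm are natural companions of a CLT coming from a martingale, it should be possible to read them off once the martingale representation of $\vec(T_n) - \vec(T^*)$ is put in a convenient form.

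First I would revisit the decomposition used for Proposition \ref{PropTlcTn}. Substituting $Y_{t} = \beta^{\, \prime} \Phi_{t-1}^{p+q} + V_{t}$ into the definition \eqref{Pn} of $P_n$ and arranging terms via the block structure of $\beta$ through $\alpha, \gamma, C_{\alpha}$ (or $G_{\alpha}$) and $C_{\gamma}$, one obtains a drift-plus-martingale splitting
\[
P_{n} \,=\, S_{n-1}\, T^{*} \,+\, L_{n} \,+\, \text{lower-order terms},
\]
where $L_{n}$ is an $\cF_{n}$-adapted $\dR^{p \times q}$-valued martingale whose increment at time $t$ is a deterministic linear function of $\Phi_{t-1}^{p}, \hdots, \Phi_{t-q}^{p}$ multiplied by $V_{t}$. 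Vectorizing, multiplying by $(I_{q} \otimes S_{n-1}^{-1})$ and using the almost sure convergence $n^{-1} S_{n-1} \to \Delta_{p}$ coming from the ergodicity of $(Y_{t})$ and Lemma \ref{LemInvD}, one arrives at the representation
\[
\vec(T_{n}) - \vec(T^{*}) \,=\, \frac{1}{n}\, K^{-1}(I_{q} \otimes \Delta_{p}^{-1})\, M_{n} \,+\, r_{n},
\]
in which $M_{n} = \vec(L_{n})$ is a locally square-integrable $\dR^{p\, q}$-valued martingale whose predictable bracket satisfies $\langle M \rangle_{n}/n \to \sigma^{2}\, \Gamma_{\!pq}$ a.s., and $r_{n}$ is a negligible remainder almost surely of order $o(\sqrt{\log \log n / n})$.

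Equipped with this representation, the quadratic strong law follows from the vector martingale QSL available in Duflo \cite{Duflo97} (see also Chaabane--Maaouia), which delivers
\[
\limn \frac{1}{\log n} \sum_{t=1}^{n} \frac{M_{t}\, M_{t}^{\, \prime}}{t^{\, 2}} = \sigma^{2}\, \Gamma_{\!pq} \cvgas
\]
The announced QSL is then obtained by sandwiching with the deterministic matrix $K^{-1}(I_{q} \otimes \Delta_{p}^{-1})$, cf.\ formula \eqref{TCov}, and by absorbing the contribution of $r_{n}$ via a Kronecker-Cauchy-Schwarz argument. For the law of iterated logarithm, I would appeal to the vector martingale LIL (see \cite{HallHeyde80}, Corollary 4.2, or Stout's LIL), for which the Lindeberg-type hypothesis is easily checked from $\dE[V_1^4] = \tau^4 < \infty$ combined with the stationarity of $(Y_{t})$.

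The main obstacle will be the careful control of the remainder $r_{n}$, and in particular checking that it is negligible uniformly along the partial-sum range needed for the QSL rather than only pointwise. This requires a telescoping argument in the passage from $T_{n} = S_{n-1}^{-1} P_{n}$ to its vectorized form $(I_{q} \otimes S_{n-1}^{-1}) \vec(P_{n})$, together with a strong law for the accumulated square of a martingale normalized by $S_{n-1}$, ensuring that the error terms are indeed of order $o((\log n)^{-1/2})$ in quadratic mean and $o(\sqrt{\log \log n / n})$ almost surely, as required by the two statements respectively.
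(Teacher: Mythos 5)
Your proposal follows essentially the same route as the paper: the same drift-plus-martingale decomposition $\vec(T_n)-\vec(T^*)=K^{-1}(I_q\otimes S_{n-1}^{-1})\vec(M_n)+\text{remainder}$ inherited from the proof of Proposition \ref{PropTlcTn}, the quadratic strong law for the vector martingale $\vec(M_n)$ (the paper invokes Theorem 2.1 of Chaabane--Maaouia after checking $\sum_n \Vert\varphi_{n-1}^{p\,q}\Vert^4/n^2<\infty$ via Lemma \ref{LemCvgSumY4}), and a law of iterated logarithm for scalar projections $v^{\,\prime}\vec(M_n)$ (the paper uses Lemma C.2 of Bercu rather than Hall--Heyde, a cosmetic difference), with the remainder absorbed exactly as you describe. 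The argument is correct and matches the paper's proof in all essentials.
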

\begin{proof}
See Appendix.
\end{proof}

\noindent Whence we deduce the following result on $\wh{\theta}_{n}$.

\begin{thm}
\label{ThmRatT}
Under the causality assumptions on $\cA$ and $\cB$, \textcolor{black}{if we assume that $K$ is invertible} and that $\dE[V_1^4] = \tau^4 < \infty$, then we have the quadratic strong law
\begin{equation*}
\limn \frac{1}{\log n} \sum_{t=1}^{n} \big( \wh{\theta}_{t} - \theta^{*} \big) \big( \wh{\theta}_{t} - \theta^{*} \big)^{\prime} = \Sigma_{\theta} \cvgas
\end{equation*}
where the limiting value $\Sigma_{\theta}$ is given by \eqref{EstTCov}. In addition, we also have the law of iterated logarithm
\begin{equation*}
\limsup_{n\, \rightarrow\, \infty} \left( \frac{n}{2 \log \log n} \right)\, \big( \wh{\theta}_{n} - \theta^{*} \big) \big( \wh{\theta}_{n} - \theta^{*} \big)^{\prime} = \Sigma_{\theta} \cvgas
\end{equation*}
\end{thm}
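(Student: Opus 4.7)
The plan is to derive both assertions directly from Proposition \ref{PropRatTn} by the same column-extraction argument that allowed Theorems \ref{ThmCvgT} and \ref{ThmTlcT} to be deduced, respectively, from Propositions \ref{PropCvgTn} and \ref{PropTlcTn}. The structural observation is that $\wht_n$ defined in \eqref{EstT} is exactly the first column of the matrix $T_n = S_{n-1}^{-1} P_n$ from \eqref{Tn}, and that $\theta^{*}$ is by definition the first column of $T^{*}$ according to \eqref{EstTLim}. Under the column-stacking convention for $\vec(\cdot)$, the vector $\wht_n - \theta^{*}$ is therefore precisely the sub-vector formed by the first $p$ components of $\vec(T_n) - \vec(T^{*})$.

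From this identification, the $p \times p$ matrix $\big( \wht_t - \theta^{*} \big)\big( \wht_t - \theta^{*} \big)^{\prime}$ coincides with the top-left $p \times p$ principal block of the $p\,q \times p\,q$ matrix $\big( \vec(T_t) - \vec(T^{*}) \big)\big( \vec(T_t) - \vec(T^{*}) \big)^{\prime}$. Since selecting a fixed principal block is a continuous linear operation on matrices, it commutes both with the normalized Cesaro sum $\frac{1}{\log n} \sum_{t=1}^{n}$ appearing in the quadratic strong law and with the $\limsup$ normalized by $n/(2 \log \log n)$ appearing in the law of iterated logarithm. Applying the block extraction to each of the two limiting relations in Proposition \ref{PropRatTn}, and recalling the definition of $\Sigma_{\theta}$ in \eqref{EstTCov} as the top-left $p \times p$ block of $\Sigma_{T}$, yields the two announced convergences for $\wht_n$ simultaneously.

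The only point requiring genuine care is the bookkeeping between the matrix $T_n$, its column-stacked vectorization, and the block partition of $\Sigma_{T}$ in \eqref{EstTCov}; no new probabilistic ingredient beyond Proposition \ref{PropRatTn} is needed. Accordingly, Theorem \ref{ThmRatT} is best presented as a direct corollary of Proposition \ref{PropRatTn}, and I do not anticipate any substantive obstacle in the write-up.
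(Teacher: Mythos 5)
Your proposal is correct and matches the paper's own argument, which likewise deduces Theorem \ref{ThmRatT} from Proposition \ref{PropRatTn} by noting that $\wht_n$ is the first column of $T_n$ and extracting the corresponding top-left $p\times p$ block (the paper simply records this as ``immediate''). Your added bookkeeping about the column-stacking convention and the commutation of block extraction with the normalized sums is a faithful elaboration of the same step.
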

\begin{proof}
From Proposition \ref{PropRatTn}, the proof is once again immediate.
\end{proof}

\noindent One can accordingly establish the rate of almost sure convergence of the cumulative quadratic errors of $\wh{\theta}_{n}$, that is
\begin{equation}
\label{EstTRatTr}
\limn \frac{1}{\log n} \sum_{t=1}^{n} \big\Vert \wh{\theta}_{t} - \theta^{*} \big\Vert^2 = \text{tr}(\Sigma_{\theta}) \cvgas
\end{equation}
Moreover, we also have the rate of almost sure convergence
\begin{equation}
\label{EstTRatNorm}
\big\Vert \wh{\theta}_{n} - \theta^{*} \big\Vert^2 = O\!\left( \frac{\log \log n}{n} \right) \cvgas
\end{equation}

\noindent We will conclude this section by comparing our results with the ones established in \cite{Proia13}, for $q=1$ and under stronger assumptions on the parameters (namely, $\Vert \theta \Vert_1 < 1$ and $\vert \rho \vert < 1$). In our framework, $T_{n}$ and $\wh{\theta}_{n}$ coincide when $q=1$ and by extension $T^{*}$ and $\theta^{*}$ also coincide, just as $\Sigma_{T}$ and $\Sigma_{\theta}$. Then \textit{via} \eqref{Beta} and \eqref{AlphaGamma}, we obtain
\begin{equation*}
\alpha = \begin{pmatrix} \theta_1 + \rho & \theta_2 - \theta_1 \rho & \hdots & \theta_{p} - \theta_{p-1} \rho \end{pmatrix}^{\prime} \hsp \text{and} \hsp \gamma = -\theta_{p}\, \rho
\end{equation*}
which leads to $
C_{\alpha} = 0$, $C_{\gamma} = -\theta_{p} \rho$ and $D = \alpha$, using the notations of \eqref{CaCg} and \eqref{D}. Accordingly, $K$ in \eqref{K} now satisfies
\begin{equation*}
K = I_{p} + \theta_{p}\, \rho\, J_{p} \hsp \text{and} \hsp K^{-1} = \frac{I_{p} - \theta_{p}\, \rho\, J_{p}}{(1 - \theta_{p}\, \rho) (1 + \theta_{p}\, \rho)}.
\end{equation*}
Hence, Theorem 2.1 of \cite{Proia13} and Theorem \ref{ThmCvgT} are clearly equivalent for $q=1$. Similarly, one can see that $\Sigma_{T}$ in \eqref{TCov} becomes
\begin{equation*}
\Sigma_{\theta} = \frac{\sigma^2\, (I_{p} + \theta_{p}\, \rho\, J_{p}) \Delta_{p}^{-1} (I_{p} + \theta_{p}\, \rho\, J_{p})}{(1 - \theta_{p}\, \rho)^2 (1 + \theta_{p}\, \rho)^2}
\end{equation*}
since from \eqref{Gamma}, $\Gamma_{\!pq} = \Delta_{p}$. From the definition of the slightly different covariance matrix $\Delta_{p}$ used in \cite{Proia13} (see Lemma B.3), the related Theorem 2.2 coincides with our Theorem \ref{ThmTlcT}. In conclusion, this work on the least squares estimator of $\theta$ in an autoregressive process driven by another autoregressive process may be seen as a wide generalization of \cite{BercuProia13}--\cite{Proia13} since we consider that $q \geq 1$ and since our set of hypothesis is far less restrictive on our parameters. We shall now, in a next part, study the residual set generated by this biased estimation of $\theta$.


\section{On the behavior of the least squares estimator of $\rho$}
\label{SecEstR}


The first step is to construct a residual set $(\wh{Z}_{n})$ associated with the observed path $(Y_{n})$. For all $1 \leq t \leq n$, let
\begin{eqnarray}
\label{EstRes}
\wh{Z}_{t} & = & Y_{t} - \wh{\theta}_{n}^{\: \prime}\, \Phi_{t-1}^{p}\nonumber \\
 & = & Y_{t} - \wh{\theta}_{1,\, n} Y_{t-1} - \hdots - \wh{\theta}_{p,\, n} Y_{t-p}
\end{eqnarray}
where we recall that $Y_{1-p} = \hdots = Y_0 = 0$. A natural way to estimate $\rho$ in \eqref{ModARGen} using least squares is to consider the estimator given, for all $n \geq 1$, by
\begin{equation}
\label{EstR}
\whr_{n} = \wh{J}_{n-1}^{\, -1}\, \sum_{t=1}^{n} \wh{\Psi}_{t-1}^{q}\, \wh{Z}_{t}
\end{equation}
where the square matrix $\wh{J}_{n}$ of order $q$ is given by
\begin{equation}
\label{Jn}
\wh{J}_{n} = \sum_{t=0}^{n} \wh{\Psi}_{t}^{q}\, \wh{\Psi}_{t}^{q\,\, \prime} + J
\end{equation}
in which $J$ is a symmetric and positive definite matrix of order $q$ added to avoid a useless invertibility assumption on $\wh{J}_{n}$, and $\wh{\Psi}_{t}^{q}$ is defined as
\begin{equation}
\label{PsiHat}
\wh{\Psi}_{t}^{q} = \begin{pmatrix}
\wh{Z}_{t} & \wh{Z}_{t-1} & \hdots & \wh{Z}_{t-q+1}
\end{pmatrix}^{\prime}.
\end{equation}
We obviously consider that $\wh{Z}_{1-q} = \hdots = \wh{Z}_0 = 0$, to simplify the calculations. It will be easier to characterize the limiting value of $\whr_{n}$ by introducing some more notations. For all $h \in \{ -q-p, \hdots, p+q+1-k \}$ and $k \in \{ p, q \}$, let
\begin{equation}
\label{Lambda}
\Lambda_{h}^{k} = \begin{pmatrix}
\ell_{h} & \ell_{h+1} & \hdots & \ell_{h+k-1}
\end{pmatrix}^{\prime}
\end{equation}
where the asymptotic covariances are given in \eqref{VecCov} and satisfy $\ell_{h} = \ell_{-h}$, by (asymptotic) stationarity. Denote by $L_{q}$ the square matrix of order $q$ given by
\begin{equation}
\label{Lq}
L_{q} = \begin{pmatrix}
\theta^{*\, \prime} \Lambda_1^{p} & \theta^{*\, \prime} \Lambda_0^{p} & \hdots & \theta^{*\, \prime} \Lambda_{-q+2}^{p} \\
\theta^{*\, \prime} \Lambda_2^{p} & \theta^{*\, \prime} \Lambda_1^{p}  & \hdots & \theta^{*\, \prime} \Lambda_{-q+3}^{p} \\
\vdots & \vdots & \ddots & \vdots \\
\theta^{*\, \prime} \Lambda_{q}^{p} & \theta^{*\, \prime} \Lambda_{q-1}^{p} & \hdots & \theta^{*\, \prime} \Lambda_1^{p}
\end{pmatrix}.
\end{equation}
Denote also by $D_{q}$ the square matrix of order $q$ given by
\begin{equation}
\label{Dq}
D_{q} = \begin{pmatrix}
\theta^{*\, \prime} \Delta_{p}\, \theta^{*} & \theta^{*\, \prime}\, \Gamma_1^{\, \prime} \, \theta^{*} & \hdots & \theta^{*\, \prime}\, \Gamma_{q-1}^{\, \prime} \, \theta^{*} \\
\theta^{*\, \prime}\, \Gamma_1 \, \theta^{*} & \theta^{*\, \prime} \Delta_{p}\, \theta^{*} & \hdots & \theta^{*\, \prime}\, \Gamma_{q-2}^{\, \prime} \, \theta^{*} \\
\vdots & \vdots & \ddots & \vdots \\
\theta^{*\, \prime}\, \Gamma_{q-1} \, \theta^{*} & \theta^{*\, \prime}\, \Gamma_{q-2} \, \theta^{*} & \hdots & \theta^{*\, \prime} \Delta_{p}\, \theta^{*}
\end{pmatrix}
\end{equation}
where the matrices $\Gamma_{h-k}$ are defined in \eqref{Gammahk}. Finally, let
\begin{equation}
\label{Eq}
E_{q} = \begin{pmatrix}
\theta^{*\, \prime} ( \Lambda_2^{p} + \Lambda_0^{p} - \Gamma_1 \, \theta^{*}) \\
\theta^{*\, \prime} (\Lambda_3^{p} + \Lambda_{-1}^{p} - \Gamma_2 \, \theta^{*}) \\
\vdots \\
\theta^{*\, \prime} (\Lambda_{q+1}^{p} + \Lambda_{-q+1}^{p} - \Gamma_{q} \, \theta^{*}) \\
\end{pmatrix}.
\end{equation}
We are now in the position to build the limiting value $\rho^{*}$, only depending on the asymptotic covariances $\ell_0, \hdots, \ell_{p+q}$ computed from the Yule-Walker equations. Using the whole notations above,
\begin{equation}
\label{EstRLim}
\textcolor{black}{\rho^{*} = \Xi_{q}^{\, -1} \big( \Lambda_1^{q} - E_{q} \big) \hsp \text{where} \hsp \Xi_{q} = \Delta_{q} - (L_{q} + L_{q}^{\, \prime} )+ D_{q}}
\end{equation}
provided that $\Xi_{q}$ is invertible.

\begin{thm}
\label{ThmCvgR}
Under the causality assumptions on $\cA$ and $\cB$, \textcolor{black}{if we assume that $\Xi_{q}$ is invertible} and that $\dE[V_1^2] = \sigma^2 < \infty$, then we have the almost sure convergence
\begin{equation*}
\limn \whr_{n} = \rho^{*} \cvgas
\end{equation*}
where the limiting value $\rho^{*}$ is given by \eqref{EstRLim}.
\end{thm}
\begin{proof}
See Appendix.
\end{proof}

\noindent Suppose that $q=1$. Then $\Delta_{q} = \ell_0$ and $L_{q} = \theta^{*\, \prime} \Lambda_1^{p} = D_{q}$ (since $\theta^{*} = \Delta_{p}^{-1} \Lambda_1^{p}$ in this particular case) on the one hand, and we have $\Lambda_1^{p} - E_{q} = \ell_1 - \theta^{*\, \prime} ( \Lambda_2^{p} + \Lambda_0^{p} - \Gamma_1 \, \theta^{*})$ on the other hand. That corresponds to Theorem 3.1 of \cite{Proia13} (formulas (C.10) and (C.11) in particular) which states that $\rho^{*} = \theta_{p} \rho\, \theta_{p}^{*}$. Consider now the null hypothesis
\begin{equation*}
\cH_0 : ``\rho_1 = \hdots = \rho_{q} = 0".
\end{equation*}
Then, it is not hard to see that, under $\cH_0$,
\begin{equation}
\label{TLimH0}
\theta^{*} = \theta.
\end{equation}
Accordingly, it follows from the Yule-Walker equations that, for all $h \in \{ 2, \hdots, q+1 \}$,
\begin{equation}
\label{CovH0}
\Lambda_1^{q} = \begin{pmatrix}
\theta^{\, \prime} \Lambda_0^{p} & \theta^{\, \prime} \Lambda_{-1}^{p} & \hdots & \theta^{\, \prime} \Lambda_{-q+1}^{p}
\end{pmatrix}^{\prime} \hsp \text{and} \hsp \Lambda_{h}^{p} = \Gamma_{h-1}\, \theta.
\end{equation}
\begin{cor}
\label{CorCvgRH0}
Suppose that $\cH_0 : ``\rho_1 = \hdots = \rho_{q} = 0"$ is true. Under the causality assumptions on $\cA$, \textcolor{black}{if we assume that $\Xi_{q}$ is invertible} and that $\dE[V_1^2] = \sigma^2 < \infty$, then we have the almost sure convergence
\begin{equation*}
\limn \whr_{n} = 0 \cvgas
\end{equation*}
\end{cor}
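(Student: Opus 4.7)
The plan is to invoke Theorem \ref{ThmCvgR} directly: it already gives $\widehat{\rho}_n \to \rho^{*}$ a.s., where $\rho^{*} = (\Delta_{q} - (L_q + L_q^{\,\prime}) + D_q)^{-1} (\Lambda_1^{q} - E_q)$. Thus the work reduces to showing that, under $\mathcal{H}_0$, the vector $\Lambda_1^{q} - E_q$ in the numerator vanishes (the invertibility of $\Delta_q - (L_q + L_q^{\,\prime}) + D_q$ is assumed throughout, and in fact trivially holds here since, under $\mathcal{H}_0$, the three matrices reduce to expressions involving only $\Delta_q$ and $\Delta_p$ which are positive definite by Lemma \ref{LemInvD}).

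First I would exploit \eqref{TLimH0}, which asserts $\theta^{*} = \theta$ under $\mathcal{H}_0$, to rewrite every occurrence of $\theta^{*}$ in the definition \eqref{Eq} of $E_q$ as $\theta$. Next, I would substitute the Yule-Walker identities \eqref{CovH0}, namely $\Lambda_h^{p} = \Gamma_{h-1}\, \theta$ for $h \in \{2, \hdots, q+1\}$, into the ``forward'' part $\Lambda_{h+1}^{p}$ of each row of $E_q$. This causes the term $\theta^{\,\prime} \Gamma_h\, \theta$ to cancel exactly with $\theta^{\,\prime} \Lambda_{h+1}^{p}$, leaving the $h$-th row of $E_q$ equal to $\theta^{\,\prime} \Lambda_{1-h}^{p}$.

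Finally I would compare row by row: the $h$-th component of $E_q$ is $\theta^{\,\prime} \Lambda_{-(h-1)}^{p}$, which is exactly the $h$-th component of $\Lambda_1^{q}$ as given in \eqref{CovH0}. Hence $\Lambda_1^{q} - E_q = 0$, so $\rho^{*} = 0$, and the corollary follows from Theorem \ref{ThmCvgR}.

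There is no real obstacle here; the only care needed is to track the indices correctly in the telescoping between the Yule-Walker relation $\Lambda_{h+1}^{p} = \Gamma_h\, \theta$ (the forward covariances of $(Y_t)$ with $p$ past lags) and the structure of the row vector $\theta^{\,\prime} \Lambda_{1-h}^{p}$ appearing both in $\Lambda_1^{q}$ and, after the cancellation, in $E_q$. The symmetry $\ell_h = \ell_{-h}$ recalled just after \eqref{Lambda} is what makes the ``backward'' entries $\Lambda_{1-h}^{p}$ well-defined and consistent with the ``forward'' Yule-Walker identity used above.
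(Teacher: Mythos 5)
Your proof is correct and is essentially the paper's own argument: substitute $\theta^{*}=\theta$ from \eqref{TLimH0} and the Yule--Walker identities \eqref{CovH0} into \eqref{Eq}, observe the cancellation $\theta^{\,\prime}\Gamma_{h}\,\theta$ against $\theta^{\,\prime}\Lambda_{h+1}^{p}$ so that $E_{q}=\Lambda_1^{q}$, and conclude $\rho^{*}=0$ via Theorem \ref{ThmCvgR} and \eqref{EstRLim}. The only quibble is your parenthetical claim that the invertibility of $\Delta_{q}-(L_{q}+L_{q}^{\,\prime})+D_{q}$ becomes ``trivial'' under $\cH_0$ --- the paper simply keeps it as a standing assumption attached to \eqref{EstRLim}, and your justification is not actually worked out, but this does not affect the argument since the numerator vanishes.
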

\begin{proof}
Under $\cH_0$, the simplifications \eqref{TLimH0} and \eqref{CovH0}, once introduced in \eqref{Eq}, immediately imply that $E_{q}=\Lambda_1^{q}$. Then we conclude using \eqref{EstRLim}.
\end{proof}

\noindent To propose our testing procedure, it only remains to establish the asymptotic normality of $\whr_{n}$ under $\cH_0$. For this purpose, consider the matrix $\Upsilon_{\!pq}$ of order $p \times q$ given, for $p \geq q$, by
\begin{equation}
\label{Ups1}
\Upsilon_{\!pq} = \sigma^2 \begin{pmatrix}
\lambda_0 & 0 & \hdots & \hdots & \hdots & \hdots & 0 \\
\lambda_1 & \ddots & \ddots & & & & \vdots \\
\vdots & \ddots & \ddots & \ddots & & & \vdots \\
\lambda_{q-1} & \hdots & \lambda_1 & \lambda_0 & 0 & \hdots & 0
\end{pmatrix}
\end{equation}
and given, for $p < q$, by
\begin{equation}
\label{Ups2}
\Upsilon_{\!pq} = \sigma^2 \begin{pmatrix}
\lambda_0 & 0 & \hdots & 0 \\
\lambda_1 & \ddots & \ddots & \vdots \\
\vdots & \ddots & \ddots & 0 \\
\lambda_{q-p} & & \ddots & \lambda_0 \\
\vdots & \ddots & & \lambda_1 \\
\vdots & & \ddots & \vdots \\
\lambda_{q-1} & \hdots & \hdots & \lambda_{q-p}
\end{pmatrix}
\end{equation}
where the sequence $\lambda_0, \hdots, \lambda_{q}$ satisfies the recursion
\begin{equation}
\label{SeqLambda}
\left\{
\begin{array}{ccl}
\lambda_0 & = & 1 \\
\lambda_1 & = & \theta_1 \\
\vdots \\
\lambda_{k} & = & \theta_1\, \lambda_{k-1} + \theta_2\, \lambda_{k-2} + \hdots + \theta_{k} \\
\vdots \\
\lambda_{q} & = & \left\{
\begin{array}{ll}
\theta_1\, \lambda_{q-1} + \theta_2\, \lambda_{q-2} + \hdots + \theta_{q} & \hsp \text{for } p \geq q \\
\theta_1\, \lambda_{q-1} + \theta_2\, \lambda_{q-2} + \hdots + \theta_{p}\, \lambda_{q-p} & \hsp \text{for } p < q.
\end{array}
\right.
\end{array}
\right.
\end{equation}

\medskip

\noindent Finally, consider the asymptotic covariance $\Sigma_{\rho}^{\, 0}$ defined as
\begin{equation}
\label{EstRCov}
\Sigma_{\rho}^{\, 0} = I_{q} - \frac{\Upsilon_{\!pq}\, \Delta_{p}^{-1} \Upsilon_{\!pq}^{\, \prime}}{\sigma^2}
\end{equation}
where $\Delta_{p}$ is given in \eqref{Delta} and $\Upsilon_{\!pq}$ in \eqref{Ups1}--\eqref{Ups2}.

\begin{thm}
\label{ThmTlcR}
Suppose that $\cH_0 : ``\rho_1 = \hdots = \rho_{q} = 0"$ is true. Under the causality assumptions on $\cA$, \textcolor{black}{if we assume that $\Xi_{q}$ is invertible} and that $\dE[V_1^4] = \tau^4 < \infty$, then we have the asymptotic normality
\begin{equation*}
\sqrt{n}\, \whr_{n} \liml \cN\big( 0 , \Sigma_{\rho}^{\, 0} \big)
\end{equation*}
where the limiting covariance $\Sigma_{\rho}^{\, 0}$ is given by \eqref{EstRCov}.
\end{thm}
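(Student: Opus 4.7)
Under $\cH_0$ the driven noise reduces to $Z_t = V_t$, so $(Y_t)$ is a stable AR($p$) process in the usual sense and the residuals satisfy $\wh Z_t = V_t - (\wht_n - \theta)'\Phi_{t-1}^p$. The plan is to linearize $\sqrt n\,\whr_n$ around the innovation vector $(V_{t-1},\hdots,V_{t-q})'$ and recast it as a smooth functional of a single $(p+q)$-dimensional martingale, so that the vector CLT of \cite{HallHeyde80} yields the asymptotic law. Ergodicity (Lemma \ref{LemCausal}) together with Theorem \ref{ThmCvgT} give $\wht_n \to \theta$ a.s. under $\cH_0$, from which $S_{n-1}/n \to \Delta_p$ and $\wh J_{n-1}/n \to \sigma^2 I_q$ follow (the off-diagonal blocks of the latter vanish because $\dE[V_{t-i}V_{t-j}]=0$ for $i\neq j$). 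After multiplying $\whr_n$ by $\sqrt n$, one can then replace $\wh J_{n-1}^{-1}$ by $(n\sigma^2)^{-1}I_q$ up to terms of order $o_p(1)$.

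The core calculation is the expansion, for each $1\le k\le q$,
\begin{equation*}
\sum_{t=1}^n \wh Z_{t-k}\wh Z_t = \sum_{t=1}^n V_{t-k}V_t - (\wht_n - \theta)'\!\sum_{t=1}^n V_{t-k}\Phi_{t-1}^p - (\wht_n - \theta)'\!\sum_{t=1}^n V_t\,\Phi_{t-k-1}^p + R_{n,k},
\end{equation*}
where $R_{n,k}$ is quadratic in $(\wht_n-\theta)$. The martingale property of $V_t\Phi_{t-k-1}^p$ bounds the third term by $O_p(1)=o_p(\sqrt n)$, and $R_{n,k}=o_p(\sqrt n)$ as well using $S_{n-1}=O(n)$ a.s. The non-trivial piece is the first cross term: injecting the MA($\infty$) representation $Y_t = \sum_{i\ge 0}\lambda_i V_{t-i}$ generated by the recursion \eqref{SeqLambda}, the ergodic theorem gives
\begin{equation*}
\frac{1}{n}\sum_{t=1}^n V_{t-k}\Phi_{t-1}^p \xrightarrow{\textnormal{a.s.}} \sigma^2\bigl(\lambda_{k-1},\,\lambda_{k-2},\hdots,\lambda_{k-p}\bigr)^{\prime},
\end{equation*}
with the convention $\lambda_j=0$ for $j<0$. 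Stacking these limits for $k=1,\hdots,q$ reconstructs precisely the matrix $\Upsilon_{\!pq}$ of \eqref{Ups1}--\eqref{Ups2}, the two cases reflecting whether the lag stencil $\{\lambda_{k-j}\}$ saturates the width $p$ before $k=q$ or not.

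Under $\cH_0$, $\wht_n - \theta = S_{n-1}^{-1}\sum \Phi_{t-1}^p V_t$, hence $\sqrt n\,(\wht_n - \theta) = \Delta_p^{-1} n^{-1/2}\sum \Phi_{t-1}^p V_t + o_p(1)$. Combining the three previous steps,
\begin{equation*}
\sqrt n\,\whr_n = \frac{1}{\sigma^2}\bigl[\,I_q,\ -\Upsilon_{\!pq}\Delta_p^{-1}\,\bigr]\cdot\frac{1}{\sqrt n}\sum_{t=1}^n \begin{pmatrix}\Psi_{t-1}^q \\ \Phi_{t-1}^p\end{pmatrix} V_t + o_p(1),
\end{equation*}
where $\Psi_{t-1}^q=(V_{t-1},\hdots,V_{t-q})'$. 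The $(p+q)$-dimensional process on the right is a square-integrable $\cF_t$-martingale whose predictable quadratic variation, once divided by $n$, converges a.s. to the block matrix
\begin{equation*}
\sigma^2\begin{pmatrix}\sigma^2 I_q & \Upsilon_{\!pq} \\ \Upsilon_{\!pq}^{\,\prime} & \Delta_p\end{pmatrix},
\end{equation*}
the cross-block arising from $\dE[V_{t-k}Y_{t-j}]=\sigma^2\lambda_{k-j}$ for $k\ge j$ and $0$ otherwise. The Lindeberg condition is ensured by $\tau^4<\infty$ together with stationarity, so the vector martingale CLT applies. A direct block computation collapses the resulting sandwich product to $\sigma^2(I_q-\sigma^{-2}\Upsilon_{\!pq}\Delta_p^{-1}\Upsilon_{\!pq}^{\,\prime})$, and the two prefactors $1/\sigma^2$ deliver exactly $\Sigma_\rho^{\,0}$ of \eqref{EstRCov}.

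The main obstacle is the bookkeeping in the linearization: splitting $\sum\wh Z_{t-k}\wh Z_t$ into four pieces, identifying the asymptotically negligible ones, and verifying that the surviving bias term reassembles into $\Upsilon_{\!pq}$ with the correct index alignment in both regimes $p\ge q$ and $p<q$. Once that identification is done, the martingale CLT and the algebraic collapse of the block product are straightforward.
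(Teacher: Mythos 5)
Your proposal is correct and follows essentially the same route as the paper: linearize the residuals under $\cH_0$, keep the surviving bias term which converges to $\Upsilon_{\!pq}\Delta_p^{-1}$ applied to the $\theta$-estimation martingale, stack everything into the $(p+q)$-dimensional martingale $\sum_t (\Psi_{t-1}^{q\,\prime},\Phi_{t-1}^{p\,\prime})^{\prime}V_t$, and apply the vector martingale CLT with the same block quadratic variation $\Gamma_N$ and sandwich collapse. The only cosmetic difference is that you carry out the decomposition componentwise where the paper uses the matrix $P_t^{p\,q}$ and the explicit remainders $R_{1,n}$, $R_{2,n}$.
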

\begin{proof}
See Appendix.
\end{proof}

\noindent Here again, note that $\Sigma_{\rho}^{\, 0}$ does not depend on $\sigma^2$ since a factor $\sigma^{-2}$ is hidden in $\Delta_{p}^{-1}$. For $q=1$, we have
\begin{equation*}
\Upsilon_{\!pq} = \begin{pmatrix}
\sigma^2 & 0 & \hdots & 0
\end{pmatrix}
\end{equation*}
and, after some additional calculations (see \textit{e.g.} the proof of Lemma  D.1 in \cite{Proia13}), we obtain that the first diagonal element $\delta$ of $\Delta_{p}^{-1}$ is given by
\begin{equation*}
\delta = \frac{1 - \theta_{\!p}^{\, 2}}{\sigma^2}.
\end{equation*}
Hence, $\Sigma_{\rho}^{\, 0} = \theta_{\!p}^{\, 2}$ (a result that may be found in \cite{BercuProia13}--\cite{Proia13}). In the last section, we present our statistical testing procedure.


\section{A statistical testing procedure}
\label{SecStatProc}


First, there is a set $\Omega^{*}$ of pathological cases that we will not study here. They correspond to the situations where $\rho^{*} = 0$ under $\cH_1 : ``\exists\, h \in \{ 1, \hdots, q \},\, \rho_{h} \neq 0"$ or where $\Sigma_{\rho}^{\, 0}$ or $\Xi_{q}$ is not invertible. When $q=1$, we have $\Omega^{*} = \{ \theta_{p} - \theta_{p-1} \rho = \theta_{p}\, \rho\, (\theta_1 + \rho) \} \cap \{\theta_{p} = 0 \}$ for $p \geq 1$, and simply $\Omega^{*} = \{ \theta = -\rho \} \cap \{\theta = 0 \}$ for $p=1$. Under our assumption that $\theta_{p} \neq 0$, $\Omega^{*}$ is a set of particular cases that do not restrain at all the whole study. Assume that we want to test
\begin{equation*}
\cH_0 : ``\rho_1 = \hdots = \rho_{q} = 0" \hsp \text{against} \hsp \cH_1 : ``\exists\, h \in \{ 1, \hdots, q \},\, \rho_{h} \neq 0"
\end{equation*}
for a given $q \geq 1$ such that $\{ \theta, \rho \} \notin \Omega^{*}$. Consider the test statistic given by
\begin{equation}
\label{StatTest}
\wh{T}_{n} ~ = ~ n\, \whr_{n}^{~\, \prime} \left( I_{q} - \frac{\wh{P}_{n}\, S_{n}^{-1}\, \wh{P}_{n}^{\, \prime}}{n\, \wh{\sigma}_{n}^{\, 2}} \right)^{\!-1}\! \whr_{n}
\end{equation}
where, for all $n \geq 1$,
\begin{equation*}
\wh{\sigma}_{n}^{\, 2} = \frac{1}{n} \sum_{t=1}^{n} \wh{Z}_{t}^{\: 2} \hsp \text{and} \hsp \wh{P}_{n} = \sum_{t=1}^{n} \wh{\Psi}_{t}^{q}\, \Phi_{t}^{p\: \prime}
\end{equation*}
with the whole notations above.

\begin{thm}
\label{ThmStatTest}
Assume that $\{ \theta, \rho \} \notin \Omega^{*}$. If $\cH_0 : ``\rho_1 = \hdots = \rho_{q} = 0"$ is true, then under the causality assumptions on $\cA$ and as soon as $\dE[V_1^4] = \tau^4 < \infty$, we have
\begin{equation*}
\wh{T}_{n} \liml \chi^2_{q}
\end{equation*}
where $\wh{T}_{n}$ is the test statistic given in \eqref{StatTest} and $\chi^2_{q}$ has a chi-square distribution with $q$ degrees of freedom. In addition, if $\cH_1 : ``\exists\, h \in \{ 1, \hdots, q \},\, \rho_{h} \neq 0"$ is true such that $\cB$ is causal, then
\begin{equation*}
\limn \vert\, \wh{T}_{n} \vert = +\infty \cvgas
\end{equation*}
\end{thm}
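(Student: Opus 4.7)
The plan is to rewrite the test statistic as $\wh{T}_{n} = (\sqrt{n}\, \whr_{n})^{\prime}\, M_{n}^{-1}\, (\sqrt{n}\, \whr_{n})$ where $M_{n} = I_{q} - \wh{P}_{n}\, S_{n}^{-1}\, \wh{P}_{n}^{\, \prime}/(n\, \wh{\sigma}^{\, 2})$, to establish the almost sure convergence $M_{n} \to \Sigma_{\rho}^{\, 0}$ under $\cH_0$, and then to combine this with the asymptotic normality of $\sqrt{n}\, \whr_{n}$ already provided by Theorem \ref{ThmTlcR} via Slutsky's lemma. Since $\Sigma_{\rho}^{\, 0}$ is invertible by the exclusion of $\Omega^{*}$, continuous mapping yields $\wh{T}_{n} \liml Z^{\prime}\, (\Sigma_{\rho}^{\, 0})^{-1}\, Z$ where $Z \sim \cN(0, \Sigma_{\rho}^{\, 0})$, and the standard identity gives $Z^{\prime} (\Sigma_{\rho}^{\, 0})^{-1} Z \sim \chi^2_{q}$.

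The technical core amounts to three componentwise almost sure limits under $\cH_0$. First, $S_{n}/n \to \Delta_{p}$ a.s.\ is a direct ergodic theorem on the causal AR$(p+q)$ representation \eqref{ModAR}, noting that under $\cH_0$ the polynomial $\cB$ reduces to $1$ so that $(Y_{n})$ is simply a stationary AR$(p)$ process with covariance matrix $\Delta_{p}$. Second, $\wh{\sigma}^{\, 2} \to \sigma^2$ a.s.\ follows from the expansion $\wh{Z}_{t} = V_{t} - (\wh{\theta}_{n} - \theta)^{\, \prime}\, \Phi_{t-1}^{p}$ (using $\theta^{*} = \theta$ from \eqref{TLimH0} and Theorem \ref{ThmCvgT}), together with the ergodic theorem on $V_{t}^{\, 2}$ and the fact that the cross and quadratic remainders vanish since $\Vert \wh{\theta}_{n} - \theta \Vert \to 0$ a.s. Third, and this is the main obstacle, one must establish $\wh{P}_{n}/n \to \Upsilon_{\!pq}$ a.s. Applying the same expansion to the rows of $\wh{\Psi}_{t}^{q}$ reduces the task to computing the ergodic limits $n^{-1} \sum_{t} V_{t-i+1}\, Y_{t-j+1}$; the causal MA$(\infty)$ expansion $Y_{t} = \sum_{k \geq 0} \lambda_{k}\, V_{t-k}$ with coefficients satisfying the recursion \eqref{SeqLambda} gives these limits as $\sigma^2\, \lambda_{i-j}$ for $i \geq j$ and $0$ otherwise, which is exactly the lower-triangular pattern of $\Upsilon_{\!pq}$ in \eqref{Ups1}--\eqref{Ups2}.

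For the divergence under $\cH_1$, Theorem \ref{ThmCvgR} provides $\whr_{n} \to \rho^{*}$ a.s.\ with $\rho^{*} \neq 0$ thanks to the exclusion $\{ \theta, \rho \} \notin \Omega^{*}$. Repeating the three componentwise convergences above, but with $\theta^{*}$ replacing $\theta$ and with the true asymptotic covariances from \eqref{VecCov} in place of their $\cH_0$ counterparts, the middle matrix $M_{n}$ still converges almost surely to some positive definite matrix $M^{*}$. Consequently $n^{-1}\, \wh{T}_{n} \to \rho^{*\, \prime}\, (M^{*})^{-1}\, \rho^{*} > 0$ a.s., so $\vert\, \wh{T}_{n} \vert \to +\infty$ a.s., completing the proof.
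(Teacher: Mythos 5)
Your treatment of the null hypothesis follows the same route as the paper's (very terse) proof and fills it in correctly: the paper's argument is exactly that $\wh{\sigma}^{\, 2} \to \sigma^2$ a.s., that $\wh{P}_{n}/n \to \Upsilon_{\!pq}$ a.s., and that the conclusion then follows from Theorem \ref{ThmTlcR} via Slutsky's lemma and the standard quadratic-form identity. Your identification of the entries of $\lim_n \wh{P}_{n}/n$ as $\sigma^2 \lambda_{i-j}$ through the MA$(\infty)$ expansion of $\cA^{-1}$, and your use of $\wh{Z}_{t} = V_{t} - (\wh{\theta}_{n} - \theta)^{\prime}\Phi_{t-1}^{p}$ with $\wh{\theta}_{n}\to\theta$ to kill the correction terms, is precisely the computation the paper delegates to the proof of Theorem \ref{ThmCvgR}, and the invertibility of $\Sigma_{\rho}^{\, 0}$ is indeed what the exclusion of $\Omega^{*}$ buys you. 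This part is sound.

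The gap is in the alternative. You assert that under $\cH_1$ the middle matrix $M_{n}$ ``still converges almost surely to some positive definite matrix $M^{*}$'' and conclude $n^{-1}\wh{T}_{n} \to \rho^{*\,\prime}(M^{*})^{-1}\rho^{*} > 0$. Nothing guarantees positive definiteness of $M^{*}$ under $\cH_1$: $M_{n} = I_{q} - \wh{P}_{n} S_{n}^{-1}\wh{P}_{n}^{\,\prime}/(n\,\wh{\sigma}^{\,2})$ subtracts from $I_{q}$ a positive semidefinite matrix whose limit is no longer $\Upsilon_{\!pq}\Delta_{p}^{-1}\Upsilon_{\!pq}^{\,\prime}/\sigma^2$, and its eigenvalues can exceed one, making $M^{*}$ indefinite or negative definite. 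This is exactly why the paper warns, just before the theorem, that ``under $\cH_1$, an absolute value is necessary'': the statistic need not be asymptotically positive. The correct conclusion from your own convergences is only that $n^{-1}\wh{T}_{n} \to \rho^{*\,\prime}(M^{*})^{-1}\rho^{*}$ a.s., and hence $\vert\wh{T}_{n}\vert \to +\infty$ \emph{provided} this limit is nonzero --- a nondegeneracy condition which is not implied by $\rho^{*}\neq 0$ alone when $M^{*}$ is indefinite, and which belongs with the pathological set $\Omega^{*}$ rather than being derivable from positive definiteness. You should drop the positivity claim, state the limit of $n^{-1}\wh{T}_{n}$ as a (possibly signed) nonzero quantity outside the pathological cases, and deduce the divergence of the absolute value.
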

\begin{proof}
Under $\cH_0$, $\wh{\sigma}_{n}^{\, 2}$ is a consistent estimator of $\sigma^2$ and it is not hard to see, as it is done in the proof of Theorem \ref{ThmCvgR}, that $\wh{P}_{n}/n$ is a consistent estimator of $\Upsilon_{\!pq}$. The end of the proof follows from Theorem \ref{ThmTlcR}, so we leave it to the reader.
\end{proof}

\noindent From a practical point of view, for a significance level $0 < \alpha < 1$, the rejection region is given by $\cR = ~ ]z_{q,\, 1-\alpha}, +\infty[$ where $z_{q,\, 1-\alpha}$ stands for the $(1-\alpha)-$quantile of the chi-square distribution with $q$ degrees of freedom. The null hypothesis $\cH_0$ will be rejected if the empirical value satisfies
\begin{equation}
\label{RejectRule}
\vert\, \wh{T}_{n} \vert > z_{q,\, 1-\alpha}.
\end{equation}

\noindent Let us now compare the empirical power of this procedure with the commonly used portmanteau tests of Ljung-Box \cite{LjungBox78} and Box-Pierce \cite{BoxPierce70}, and with the Breusch-Godfrey \cite{Breusch78}--\cite{Godfrey78} procedure. \textcolor{black}{Of course it would deserve an in-depth simulation study, we only give here an overview of the results obtained on some examples.} Two arbitrarily chosen causal autoregressive processes $(Y_{1,\, n})$ and $(Y_{2,\, n})$ are generated (with $p=3$, $\theta_1 = \frac{3}{10}$, $\theta_2 = -\frac{1}{5}$ and $\theta_3 = \frac{2}{5}$ in the first case, $p=2$, $\theta_1 = \frac{17}{10}$ and $\theta_2 = -\frac{18}{25}$ in the second case), with initial values set to zero, using different driven noises $(Z_{1,\, n})$ and $(Z_{2,\, n})$. Each of them is itself generated following \eqref{ModARGen}, for different values of $q \in \{1, \hdots, 4\}$ \textcolor{black}{and $\rho$}, such that
\begin{equation*}
(V_{1,\, n}) \iid \cU([-2, 2]) \hsp \text{and} \hsp (V_{2,\, n}) \iid \cN(0, 2).
\end{equation*}
For a simulated path, we only test for serial correlation using our procedure (Thm \ref{ThmStatTest}), the Breusch-Godfrey procedure (BG) and the Ljung-Box procedure (LB), since we know that the Box-Pierce one is an equivalent of LB, for $q^{\, \prime} = \{1, \hdots, 4\}$ \textcolor{black}{and $\alpha = 0.05$}. For $N=10^4$ simulations of each configuration, the frequency with which $\cH_0$ is rejected yields an estimator of the power of the procedures, that is
\begin{equation*}
\wh{\dP}\big( \text{reject } \cH_0\, \vert\, \cH_1 \text{ is true}\big).
\end{equation*}
\noindent We repeat the simulations for $n=3000$, $n=300$ and $n=30$, to have an overview of the results in an asymptotic context as well as on reasonable and small-sized samples. Finally, we also test the particular cases where $q=0$ (with $q^{\, \prime} = \{1, \hdots, 4\}$) to evaluate the behavior of the procedures under the null of serial independance. All our results are summarized on Figures \ref{Fig_Hist1} and \ref{Fig_Pow1} for the first model, on Figures \ref{Fig_Hist2} and \ref{Fig_Pow2} for the second model. (Thm \ref{ThmStatTest}) is represented in blue, (BG) in orange and (LB) in violet. For each of them, the solid line is associated with $q^{\, \prime} = 1$, the dotted line with $q^{\, \prime} = 2$, the dashed line with $q^{\, \prime} = 3$ and the two-dashed line with $q^{\, \prime} = 4$.

\begin{figure}[h!]
\includegraphics[width=15.5cm]{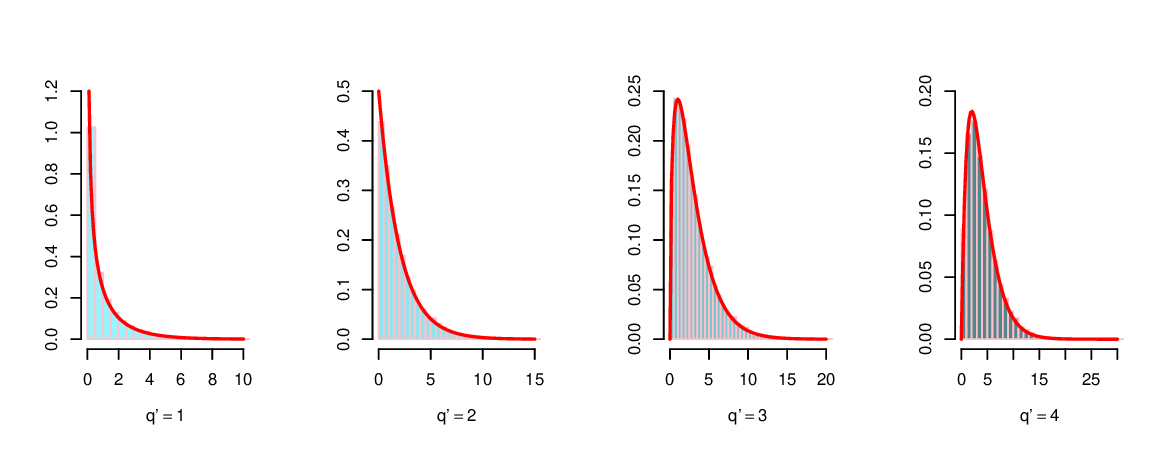}
\vspace{-0.5cm} \\
\caption{Empirical distribution of the test statistic under the null in the first model for $q^{\, \prime} = \{1, \hdots, 4\}$ and $n=3000$, superimposition of the theoretical limiting distributions.}
\label{Fig_Hist1}
\end{figure}

\begin{figure}[h!]
\includegraphics[width=15.5cm]{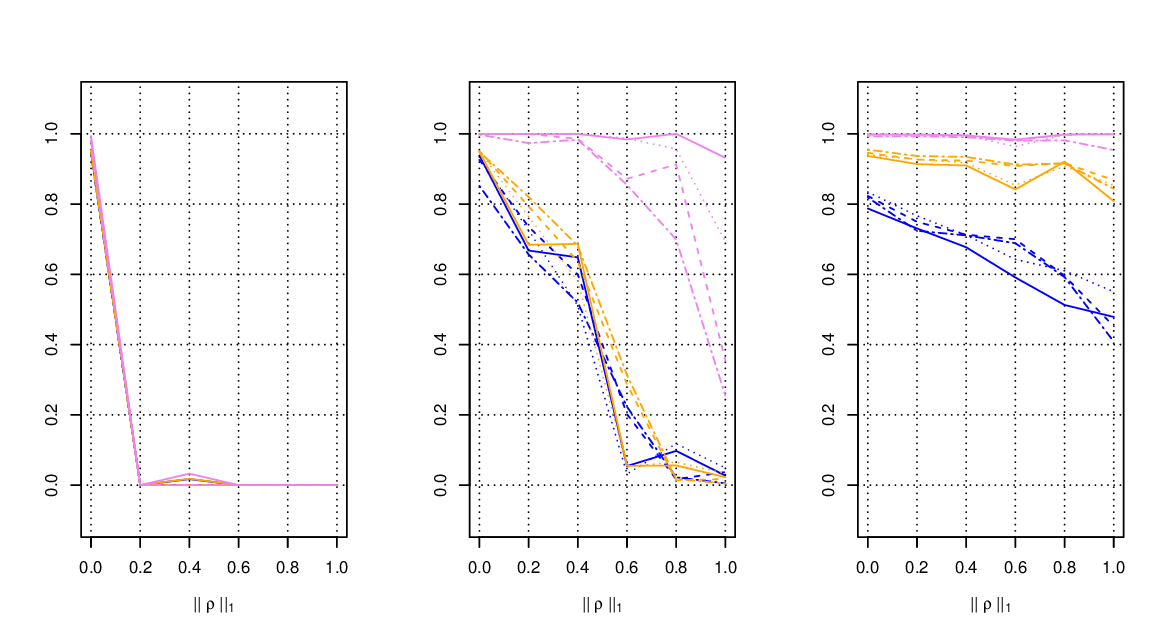}
\vspace{-0.5cm} \\
\caption{Frequency of non-rejection of the null for $n=3000$ (left), $n=300$ (center) and $n=30$ (right), in the first model.}
\label{Fig_Pow1}
\end{figure}

\begin{figure}[h!]
\includegraphics[width=15.5cm]{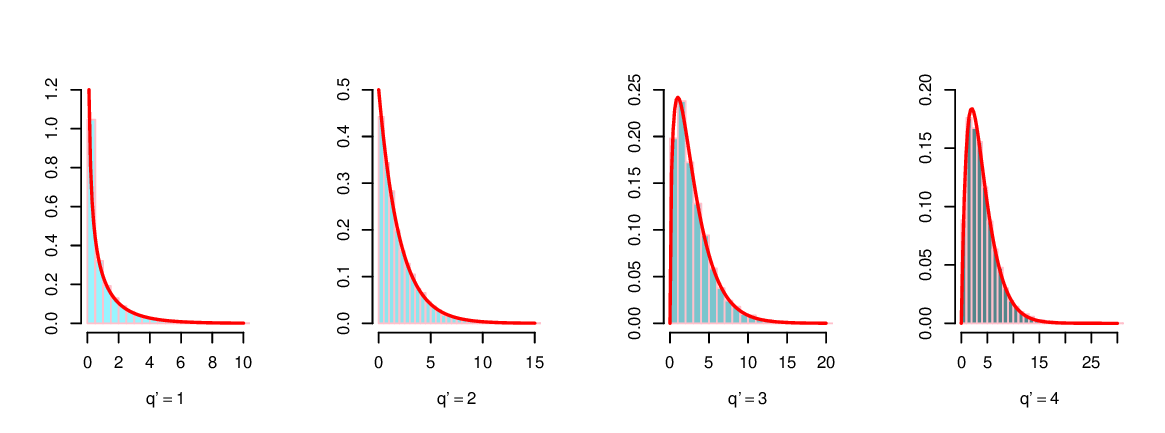}
\vspace{-0.5cm} \\
\caption{Empirical distribution of the test statistic under the null in the second model for $q^{\, \prime} = \{1, \hdots, 4\}$ and $n=3000$, superimposition of the theoretical limiting distributions.}
\label{Fig_Hist2}
\end{figure}

\begin{figure}[h!]
\includegraphics[width=15.5cm]{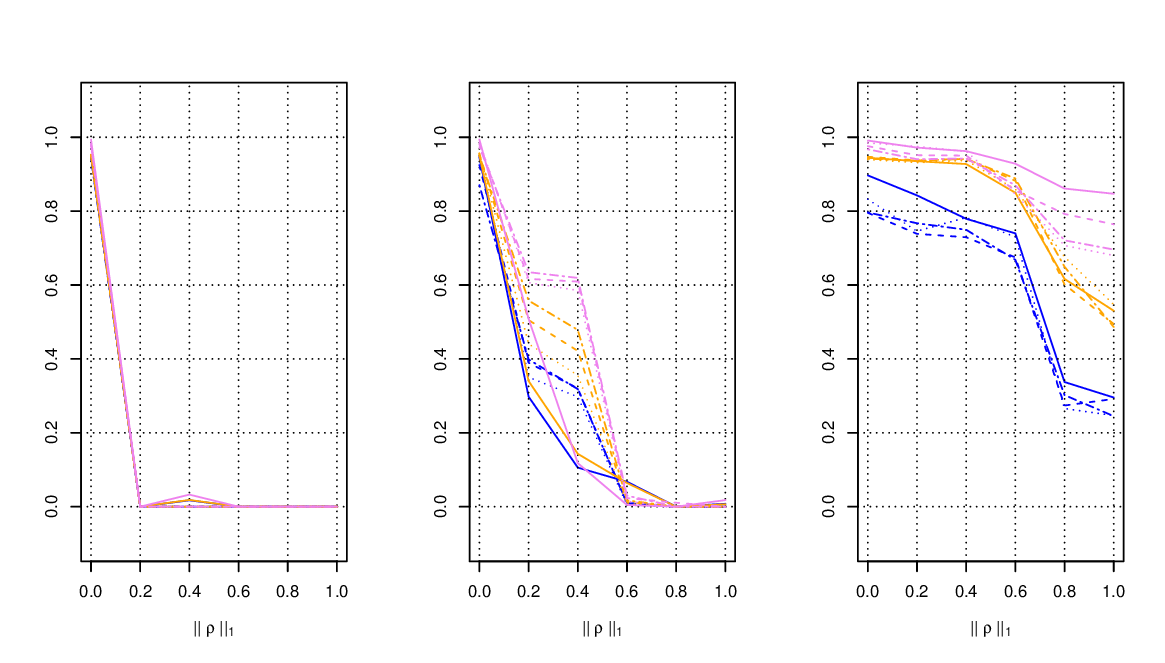}
\vspace{-0.5cm} \\
\caption{Frequency of non-rejection of the null for $n=3000$ (left), $n=300$ (center) and $n=30$ (right), in the second model.}
\label{Fig_Pow2}
\end{figure}

\noindent The histograms built under $\cH_0$ justifies the $1-\alpha$ level of the test, since one can observe that there is a strong adequation between the empirical test statistics and their theoretical distributions. Conversely, under $\cH_1$, our procedure appears to be more powerful on samples of reasonable size, and especially on small-sized samples. Giving conclusions on the basis of two examples seems particularly precarious, but we have of course considered lots of other examples that we cannot display in this paper, for larger $p$ and $q$, \textcolor{black}{for various generating processes and different values of $\theta$ and $\rho$ (satisfying the causality assumptions)}.


\section{Concluding remarks}


We have provided a sharp analysis on the asymptotic behavior of the least squares estimator of the parameter of an autoregressive process when the noise is also driven by an autoregressive process, establishing results that generalize \cite{BercuProia13} and \cite{Proia13}, also weakening assumptions. In addition, the investigation of the residual set yielded an estimator of the serial correlation parameter together with its limiting value and its asymptotic normality, under the null. We have observed that our procedure gave either better or equivalent results than the usual tests for serial correlation, depending on the sample size. Due to calculation complexity, we have only stipulated the asymptotic normality of the serial correlation estimator under the null. Even if it is of lesser usefulness on real data, it could be challenging to generalize Theorem \ref{ThmTlcR} to the alternative. Nevertheless, this study leads to a particular case of great practical interest. Suppose that the estimation of the autoregressive part of the process is misjudged and done for $p^{\, *} < p$. Then, for all $z \in \dC$, we have the polynomial expression
\begin{equation*}
\cA(z)\, \cB(z) ~ = ~ \prod_{k=1}^{p} \Big(1 - \frac{z}{\lambda_{k}} \Big)\, \cB(z) ~ = ~ \prod_{k=1}^{p^{\, *}} \Big(1 - \frac{z}{\lambda_{k}} \Big)\, \cB^{\, *}(z) ~ = ~ \cA^{*}(z)\, \cB^{\, *}(z)
\end{equation*}
where $(\lambda_{k})$ are the roots of $\cA$, $\cA^{*}$ is a causal polynomial of order $p^{\, *}$ and $\cB^{\, *}$ is a causal polynomial of order $q+p-p^{\, *}$. Similarly, if the estimation of the autoregressive part of the process is misjudged with $p^{\, *} > p$, there exists causal polynomials $\cA^{*}$ and $\cB^{\, *}$ of order $p^{\, *}$ and $q+p-p^{\, *}$, respectively, such that $\cA(z)\, \cB(z) = \cA^{*}(z)\, \cB^{\, *}(z)$. If $p^{\, *} \geq p+q$, then there is no serial correlation in $\cB$ anymore and we are placed under the null. This little reasoning shows that $p$ does not play a crucial role in this procedure, which is a corollary of great practical interest considering the issue of selecting the minimal value of $p$ in an autoregressive modelling. \textcolor{black}{The procedure can accordingly be used to test for the true value of $p$ in an AR$(p)$ model, the smallest one not leading to $\cH_1$.} Of course, a substantial contribution would be to drive all these results to general ARMA processes and a lot of pathological cases also remain to study (namely, the full description of $\Omega^{*}$).

\smallskip

To conclude, let us mention that once detected, several algorithms exist to produce unbiased estimators despite the residual correlation (see \textit{e.g.} Pierce \cite{Pierce70} and Hatanaka \cite{Hatanaka74}). Theses estimators have usefully to be compared to their biased counterparts (see \textit{e.g.} Sargent \cite{Sargent68}, Hong and L'Esperance \cite{HongLEsperance73},
Maeshiro \cite{Maeshiro80}--\cite{Maeshiro87}--\cite{Maeshiro90}--\cite{Maeshiro96}--\cite{Maeshiro99}, Flynn and Westbrook \cite{FlynnWestbrook84}, or Jekanowski and Binkley \cite{JekanowskiBinkley96}). We can also cite the recent approaches of Francq, Roy and Zako\"ian \cite{FrancqRoyZakoian05} in 2005, and Duchesne and Francq \cite{DuchesneFrancq08} in 2008, where in particular the aysmptotic covariance matrix of a vector of autocorrelations for residuals is estimated in an ARMA model to get the asymptotic distribution of the portmanteau statistics. The objective is also to correct the bias generated by the presence of residual correlation.

\smallskip

\noindent \textcolor{black}{\textbf{Acknowledgments.} We thank the anonymous Reviewer for the suggestions and comments.}


\section*{Technical appendix}


\noindent \textbf{Proof of Lemma \ref{LemCausal}.} \textcolor{black}{From \eqref{ModARGen}, we directly get}
\begin{equation}
\label{ModARPol}
\cB(L)\cA(L) Y_{t} = V_{t}
\end{equation}
which shows that $(Y_{t})$ is an AR($p+q$) process. Moreover, the relation
\begin{equation*}
(1 - \rho_1\, z - \hdots - \rho_{q}\, z^{q})(1 - \theta_1\, z - \hdots - \theta_{p}\, z^{p}) = 1 - \beta_1\, z - \hdots - \beta_{p+q}\, z^{p+q}
\end{equation*}
easily enables to identify $\beta$ in \eqref{Beta}. The causality assumptions on $\cA$ and $\cB$ directy implies the causality of the product polynomial $\cB \cA$. From Theorem 3.1.1 of \cite{BrockwellDavis91} associated with Remark 2 that follows, we know that \eqref{ModARPol} has the MA($\infty$) expression given, for all $t \in \dZ$, by
\begin{equation}
\label{ModMAInf}
Y_{t} = \sum_{k=0}^{\infty} \psi_{k} V_{t-k} \hsp \text{with} \hsp \sum_{k=0}^{\infty} \vert\, \psi_{k} \vert < \infty.
\end{equation}
Hence, we define the function from $\dR^{\infty}$ into $\dR$ as
\begin{equation*}
g(x_0,\, x_1,\, \hdots) = \sum_{k=0}^{\infty} \psi_{k}\, x_{k}
\end{equation*}
and we note that, for all $x, y \in \dR^{\infty}$,
\begin{equation*}
\vert\, g(x) - g(y) \vert ~ \leq ~ \sum_{k=0}^{\infty} \vert\, \psi_{k} \vert \vert\, x_{k} - y_{k} \vert ~ \leq ~ \Vert x - y \Vert_{\infty} \sum_{k=0}^{\infty} \vert\, \psi_{k} \vert.
\end{equation*}
We deduce that $g$ is Lipschitz, and thus continuous. By virtue of Theorem 3.5.8 of \cite{Stout74} and since $(V_{t})$ is obviously an ergodic process, we conclude from \eqref{ModMAInf} that $(Y_{t})$ is also an ergodic process.
\hfill \qedsymbol

\bigskip

\noindent \textbf{Proof of Lemma \ref{LemInvD}.} To prove that $\Delta_{h}$ given in \eqref{Delta} is positive definite for all $h \geq 1$, we will use a methodology very similar to the one establishing Lemma 2.2 in \cite{Proia13}. As a matter of fact, we know from Lemma \ref{LemCausal} that the polynomial $\cB \cA$ in \eqref{ModARPol} is causal. As a consequence, from Theorem 4.4.2 of \cite{BrockwellDavis91}, the spectral density $f_{Y}$ associated with $(Y_{t})$ is given, for all $x$ in the torus $\dT = [-\pi,\pi]$, by
\begin{equation*}
f_{Y}(x) = \frac{\sigma^2}{2 \pi \vert \cB(\de^{-\di x}) \cA(\de^{-\di x}) \vert}.
\end{equation*}
In addition, it is well-known that the covariance matrix of order $h$ of the stationary process $(Y_{t})$ coincides with the Toeplitz matrix of order $h$ of the Fourier coefficients associated with $f_{Y}$. Namely, for all $h \geq 1$,
\begin{equation*}
\cT_{h}(f_{Y}) = \big( \wh{f}_{i-j} \big)_{\!1\, \leq\: i,\,j\, \leq\: h} = \Delta_{h}
\end{equation*}
where $\cT_{h}$ is the Toeplitz operator of order $h$ and, for all $k \in \dZ$,
\begin{equation*}
\wh{f}_{k} = \int_{\dT} f_{Y}(x)\, \de^{-\di k x}\, \dd x.
\end{equation*}
Finally, we apply Proposition 4.5.3 of \cite{BrockwellDavis91} (see also \cite{GrenanderSzego58}) to conclude that, since we obviously have
\begin{equation*}
0 < \inf_{x\, \in\: \dT}\, f_{Y}(x),
\end{equation*}
then
\begin{equation*}
0 ~ < ~ 2 \pi \inf_{x\, \in\: \dT}\, f_{Y}(x) ~ \leq ~ \lambda_{\min}(\Delta_{h}) ~ \leq ~ \lambda_{\max}(\Delta_{h}).
\end{equation*}
This clearly shows that for all $h \geq 1$, $\Delta_{h}$ is positive definite and thus invertible.
\hfill \qedsymbol

\bigskip

\noindent \textbf{Proof of Proposition \ref{PropCvgTn}.} At this stage, one needs an additional technical lemma, directly exploiting the ergodicity of the process.

\begin{lem}
\label{LemCvgSumY2}
Under the causality assumptions on $\cA$ and $\cB$ and as soon as $\dE[V_1^2] = \sigma^2 < \infty$, we have the almost sure convergence, for all $h \in \{ 0, \hdots, p+q \}$,
\begin{equation*}
\limn \frac{1}{n} \sum_{t=1}^{n} Y_{t-h} Y_{t} = \ell_{h} \cvgas
\end{equation*}
where $\ell_0, \hdots, \ell_{p+q}$ are given in \eqref{VecCov}. In particular, we also have the almost sure convergence
\begin{equation*}
\limn \frac{Y_{n-h} Y_{n}}{n} = 0 \cvgas
\end{equation*}
\end{lem}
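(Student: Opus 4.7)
The plan is to reduce to the stationary version of the process and invoke Birkhoff's ergodic theorem. By Lemma \ref{LemCausal}, the doubly-infinite process $(\widetilde{Y}_t)_{t\in\dZ}$ driven by the white noise $(V_t)$ admits the absolutely convergent MA($\infty$) representation $\widetilde{Y}_t = \sum_{k\geq 0} \psi_k V_{t-k}$ and is strictly stationary and ergodic; the assumption $\dE[V_1^2]=\sigma^2<\infty$ combined with the absolute summability of $(\psi_k)$ furnishes $\dE[\widetilde{Y}_1^2]<\infty$, so by Cauchy--Schwarz $\widetilde{Y}_{t-h}\widetilde{Y}_t$ is integrable. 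Birkhoff's theorem then yields
\[
\frac{1}{n}\sum_{t=1}^n \widetilde{Y}_{t-h}\,\widetilde{Y}_t \longrightarrow \dE\bigl[\widetilde{Y}_1\widetilde{Y}_{1+h}\bigr] = \ell_h \quad \textnormal{a.s.}
\]
by stationarity and the definition \eqref{StatLh}--\eqref{VecCov}.

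Next I would bridge $(\widetilde{Y}_t)$ and the zero-initialized path $(Y_t)$ used in Section \ref{SecEstT}. Driving both by the same innovations $(V_t)_{t\geq 1}$, the difference $R_t=\widetilde{Y}_t - Y_t$ involves only innovations $V_s$ with $s\leq 0$, and by the causality of $\cB\cA$ the MA coefficients $\psi_k$ decay exponentially. A standard bound gives $\dE[R_t^2]=O(\eta^t)$ for some $\eta\in(0,1)$, so $R_t\to 0$ in $L^2$ (and a.s. after a Borel--Cantelli argument exploiting the exponential rate). Writing
\[
Y_{t-h}Y_t - \widetilde{Y}_{t-h}\widetilde{Y}_t = R_{t-h}\widetilde{Y}_t + \widetilde{Y}_{t-h}R_t - R_{t-h}R_t
\]
and applying Cauchy--Schwarz together with Kronecker's lemma, the Cesàro average of the right-hand side vanishes almost surely. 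This transfers the ergodic limit to the observed path and yields the first claim.

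For the second statement I would use the telescoping identity
\[
\frac{Y_{n-h}\,Y_n}{n} = \frac{1}{n}\sum_{t=1}^n Y_{t-h}Y_t - \frac{n-1}{n}\cdot\frac{1}{n-1}\sum_{t=1}^{n-1} Y_{t-h}Y_t.
\]
Both averages on the right converge almost surely to $\ell_h$ by the first part, and $(n-1)/n\to 1$, so the difference converges to $\ell_h-\ell_h=0$ a.s.

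The main obstacle is the transition from the stationary object to the zero-initialized observation: while Birkhoff applies cleanly to $(\widetilde{Y}_t)$, one must verify that the truncation of the innovation history at $t=0$ does not perturb the empirical second moment in the limit. The exponential decay of the MA coefficients provided by the causality of $\cA$ and $\cB$ is precisely what makes this transfer painless; everything else is routine.
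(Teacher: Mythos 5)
Your proof is correct and follows essentially the same route as the paper, which simply declares the lemma an immediate consequence of the ergodicity of $(Y_t)$ established in Lemma \ref{LemCausal}. The only added value is that you make explicit the transfer from the strictly stationary process on $\dZ$ to the zero-initialized observed path via the geometric decay of the homogeneous solution, a step the paper leaves implicit; your telescoping argument for the second claim also matches the standard (unstated) reasoning.
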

\begin{proof}
As soon as $\dE[V_1^2] = \sigma^2 < \infty$, this is an immediate consequence of the ergodicity of $(Y_{t})$, stipulated in Lemma \ref{LemCausal}.
\end{proof}

\noindent From \eqref{ModAR} and the associated notations, we deduce that, for all $1 \leq t \leq n$,
\begin{equation*}
Y_{t} = \alpha^{\, \prime} \Phi_{t-1}^{p} + \gamma^{\, \prime} \Phi_{t-p-1}^{q} + V_{t}
\end{equation*}
where $\alpha$ of order $p$ and $\gamma$ of order $q$ are defined in \eqref{AlphaGamma}. It follows that, for all $h \in \{ 1, \hdots, q \}$,
\begin{equation}
\label{SumAR}
\sum_{t=1}^{n} \Phi_{t-h}^{p} Y_{t} = \sum_{t=1}^{n} \Phi_{t-h}^{p} \Phi_{t-1}^{p\,\, \prime} \alpha + \sum_{t=1}^{n} \Phi_{t-h}^{p} \Phi_{t-p-1}^{q\,\, \prime} \gamma + \sum_{t=1}^{n} \Phi_{t-h}^{p} V_{t}.
\end{equation}
Note also, for all $k \in \{ 0, \hdots, p \}$ and $h \in \{ 1, \hdots, q \}$,
\begin{equation*}
\Sigma_{n\!,\, k} = \sum_{t=1}^{n} \Phi_{t}^{p}\, Y_{t-k} \hsp \text{and} \hsp \Pi_{n\!,\, h} = \sum_{t=1}^{n} \Phi_{t-h}^{p} Y_{t}.
\end{equation*}
To lighten the calculations, we will now omit the subscripts (from $t=1$ to $n$) on the summation operators. We will also suppose that $p \geq q$. Then, for $h=1$ and $h=2$, the first term in the right-hand side of \eqref{SumAR} is built using
\begin{eqnarray}
\sum \Phi_{t-1}^{p} \Phi_{t-1}^{p\,\, \prime} \alpha & = & \sum \Phi_{t-1}^{p} (\alpha_1 Y_{t-1} + \alpha_2 Y_{t-2} + \hdots + \alpha_{p} Y_{t-p} )\nonumber \\
 & = & \alpha_1 \sum \Phi_{t-1}^{p} Y_{t-1} + \alpha_2 \sum \Phi_{t-1}^{p} Y_{t-2} + \hdots + \alpha_{p} \sum \Phi_{t-1}^{p} Y_{t-p}\nonumber \\
 & = & \alpha_1\, \Sigma_{n\!,\, 0} + \alpha_2\, \Sigma_{n\!,\, 1} + \hdots + \alpha_{p}\, \Sigma_{n\!,\, p-1} + r_{1,\, n},
\label{EstTDev1}
\end{eqnarray}
\begin{eqnarray}
\sum \Phi_{t-2}^{p} \Phi_{t-1}^{p\,\, \prime} \alpha & = & \sum \Phi_{t-2}^{p} (\alpha_1 Y_{t-1} + \alpha_2 Y_{t-2} + \hdots + \alpha_{p} Y_{t-p} )\nonumber \\
 & = & \alpha_1 \sum \Phi_{t-2}^{p} Y_{t-1} + \alpha_2 \sum \Phi_{t-2}^{p} Y_{t-2} + \hdots + \alpha_{p} \sum \Phi_{t-2}^{p} Y_{t-p}\nonumber \\
 & = & \alpha_1\, \Pi_{n\!,\, 1} + \alpha_2\, \Sigma_{n\!,\, 0} + \hdots + \alpha_{p}\, \Sigma_{n\!,\, p-2} + r_{2,\, n}.
\label{EstTDev2}
\end{eqnarray}
\medskip
\noindent Following the same reasoning for each $h$, we finally obtain, for $h = q$,
\begin{eqnarray}
\sum \Phi_{t-q}^{p} \Phi_{t-1}^{p\,\, \prime} \alpha & = & \sum \Phi_{t-q}^{p} (\alpha_1 Y_{t-1} + \alpha_2 Y_{t-2} + \hdots + \alpha_{p} Y_{t-p} )\nonumber \\
 & = & \alpha_1 \sum \Phi_{t-q}^{p} Y_{t-1} + \alpha_2 \sum \Phi_{t-q}^{p} Y_{t-2} + \hdots + \alpha_{p} \sum \Phi_{t-q}^{p} Y_{t-p}\nonumber \\
 & = & \alpha_1\, \Pi_{n\!,\, q-1} + \alpha_2\, \Pi_{n\!,\, q-2} + \hdots + \alpha_{p}\, \Sigma_{n\!,\, p-q} + r_{q,\, n}.
\label{EstTDev4}
\end{eqnarray}
\medskip
\noindent The remainder terms satisfy, \text{via} Lemma \ref{LemCvgSumY2} and for all $h \in \{1, \hdots, q \}$, 
\begin{equation}
\label{CvgResS1}
\Vert r_{h,\, n} \Vert = o(n) \cvgas
\end{equation}
We will now study in the same manner the more intricate second term in \eqref{SumAR}. For the first values of $h$, we have the following equalities.
\begin{eqnarray}
\sum \Phi_{t-1}^{p} \Phi_{t-p-1}^{q\,\, \prime} \gamma & = & \sum \Phi_{t-1}^{p} (\gamma_1 Y_{t-p-1} + \gamma_2 Y_{t-p-2} + \hdots + \gamma_{q} Y_{t-p-q} )\nonumber \\
 & = & \!\gamma_1 \sum \Phi_{t-1}^{p} Y_{t-p-1} + \gamma_2 \sum \Phi_{t-1}^{p} Y_{t-p-2} + \hdots + \gamma_{q} \sum \Phi_{t-1}^{p} Y_{t-p-q}\nonumber \\
 & = & \gamma_1\, J_{p}\, \Pi_{n\!,\, 1} + \gamma_2\, J_{p}\, \Pi_{n\!,\, 2} + \hdots + \gamma_{q}\, J_{p}\, \Pi_{n\!,\, q} + \tau_{1,\, n}.
\label{EstTDev5}
\end{eqnarray}
\begin{eqnarray}
\sum \Phi_{t-2}^{p} \Phi_{t-p-1}^{q\,\, \prime} \gamma & = & \sum \Phi_{t-2}^{p} (\gamma_1 Y_{t-p-1} + \gamma_2 Y_{t-p-2} + \hdots + \gamma_{q} Y_{t-p-q} )\nonumber \\
 & = & \!\gamma_1 \sum \Phi_{t-2}^{p} Y_{t-p-1} + \gamma_2 \sum \Phi_{t-2}^{p} Y_{t-p-2} + \hdots + \gamma_{q} \sum \Phi_{t-2}^{p} Y_{t-p-q}\nonumber \\
 & = & \gamma_1\, \Sigma_{n\!,\, p-1} + \gamma_2\, J_{p}\, \Pi_{n\!,\, 1} + \hdots + \gamma_{q}\, J_{p}\, \Pi_{n\!,\, q-1} + \tau_{2,\, n}.
\label{EstTDev6}
\end{eqnarray}
\medskip
\noindent Finally, for $h = q$,
\begin{eqnarray}
\sum \Phi_{t-q}^{p} \Phi_{t-p-1}^{q\,\, \prime} \gamma & = & \sum \Phi_{t-q}^{p} (\gamma_1 Y_{t-p-1} + \gamma_2 Y_{t-p-2} + \hdots + \gamma_{q} Y_{t-p-q} )\nonumber \\
 & = & \!\gamma_1 \sum \Phi_{t-q}^{p} Y_{t-p-1} + \gamma_2 \sum \Phi_{t-q}^{p} Y_{t-p-2} + \hdots + \gamma_{q} \sum \Phi_{t-q}^{p} Y_{t-p-q}\nonumber \\
 & = & \gamma_1\, \Sigma_{n\!,\, p-q+1} + \gamma_2\, \Sigma_{n\!,\, p-q+2} + \hdots + \gamma_{q}\, J_{p}\, \Pi_{n\!,\, 1} + \tau_{q,\, n}.
\label{EstTDev8}
\end{eqnarray}
\medskip
\noindent Again, the remainder terms satisfy, \text{via} Lemma \ref{LemCvgSumY2} and for all $h \in \{1, \hdots, q \}$, 
\begin{equation}
\label{CvgResS2}
\Vert \tau_{h,\, n} \Vert = o(n) \cvgas
\end{equation}
The following step is to note that
\begin{equation*}
S_{n-1} = \begin{pmatrix}
\Sigma_{n\!,\, 0} & \Sigma_{n\!,\, 1} & \hdots & \Sigma_{n\!,\, p-1}
\end{pmatrix} + o(n) \cvgas
\end{equation*}
and that
\begin{equation*}
P_{n} = \begin{pmatrix}
\Pi_{n\!,\, 1} & \Pi_{n\!,\, 2} & \hdots & \Pi_{n\!,\, q}
\end{pmatrix} + o(n) \cvgas
\end{equation*}
where $S_{n}$ and $P_{n}$ are given in \eqref{Sn} and \eqref{Pn}, respectively. Combining all these equations, it is now easy to establish that, for all $n \geq 1$,
\begin{equation}
\label{EstTVecEq}
P_{n} = S_{n-1}\, D + P_{n}\, C_{\alpha}^{\, \prime} + J_{p}\, P_{n}\, C_{\gamma} + M_{n} + \xi_{n}
\end{equation}
where $C_{\alpha}$, $C_{\gamma}$ and $D$ are given in \eqref{CaCg} and \eqref{D}, where $\xi_{n}$ is a residual such that $\VVert \xi_{n} \VVert = o(n)$ a.s. and where $M_{n}$ is the following matrix martingale of order $p \times q$,
\begin{equation}
\label{MartMat}
M_{n} = \begin{pmatrix}
M_{1,\, n} & M_{2,\, n} & \hdots & M_{q,\, n}
\end{pmatrix}
\end{equation}
in which each vector is itself a vector martingale of order $p$ given, for all $h \in \{1, \hdots, q \}$, by
\begin{equation*}
M_{h,\, n} = \sum_{t=1}^{n} \Phi_{t-h}^{p} V_{t}.
\end{equation*}
Under our assumptions, the vector martingale $M_{h,\, n}$ is locally square-integrable and adapted to the filtration $(\cF_{n})$ defined in the beginning of Section \ref{SecEstT}. Its predictable quadratic variation is given, for all $n \geq 1$, by
\begin{equation}
\label{MartCroch}
\langle M_{h} \rangle_{n} = \sum_{t=1}^{n} \dE[ (\Delta M_{h,\, t})(\Delta M_{h,\, t})^{\prime}\, \vert\, \cF_{t-1} ] = \sigma^2 (S_{n-h} - S)
\end{equation}
and obviously satisfies, by virtue of Lemma \ref{LemCvgSumY2},
\begin{equation*}
\limn \frac{\langle M_{h} \rangle_{n}}{n} = \sigma^2 \Delta_{p} \cvgas
\end{equation*}
where $\Delta_{p}$ is the covariance matrix of order $p$ given in \eqref{Delta}. Whence we obtain that
\begin{equation*}
\limn \frac{\text{tr}(\langle M_{h} \rangle_{n})}{n} = \sigma^2\, p\, \ell_0 > 0 \cvgas
\end{equation*}
Since $\Delta_{p}$ is positive definite (Lemma \ref{LemInvD}), $\lambda_{\text{min}}(\langle M_{h} \rangle_{n})$ diverges and, for any $\delta > 0$,
\begin{equation*}
\log(\lambda_{\text{max}}(\langle M_{h} \rangle_{n}))^{1+\delta} = o(\lambda_{\text{min}}(\langle M_{h} \rangle_{n}) \cvgas
\end{equation*}
This is sufficient to apply the strong law of large numbers for vector martingales (Theorem 4.3.15 of \cite{Duflo97}, or \cite{DufloSenoussiTouati90}). Thus,
\begin{equation*}
\limn \langle M_{h} \rangle_{n}^{-1} M_{h,\, n} = 0 \cvgas
\end{equation*}
since $S_{n-h} = S_{n-1} + o(n)$ a.s. Reasoning column by column, it follows from all our previous calculations that
\begin{equation}
\label{EstTCvgMartRes}
\limn \langle M_{h} \rangle_{n}^{-1} M_{n} = 0 \cvgas \hsp \text{and} \hsp \limn \langle M_{h} \rangle_{n}^{-1} \xi_{n} = 0 \cvgas
\end{equation}
where $M_{n}$ is the matrix martingale given by \eqref{MartMat}. Let us get back to \eqref{EstTVecEq}. For all $n \geq 1$,
\begin{equation}
\label{EstTVecEq2}
T_{n}\,(I_{q} - C_{\alpha}^{\, \prime}) - J_{p}\, T_{n}\, C_{\gamma} = D + S_{n-1}^{-1}\, M_{n} + S_{n-1}^{-1}\, R_{n}
\end{equation}
if we remember that $T_{n} = S_{n-1}^{-1} P_{n}$, and if we note that $J_{p}\, S_{n-1}^{-1} = S_{n-1}^{-1}\, J_{p} + O(n^{-1})$ a.s., which implies to slightly modify $\xi_{n}$ into $R_{n}$. Now, we use the combination of Lemmas \ref{LemInvD} and \ref{LemCvgSumY2} to establish that
\begin{equation*}
\limn T_{n} = \Delta_{p}^{-1}\, \Pi_{pq} = T^{*} \cvgas
\end{equation*}
using the notations of Proposition \ref{PropCvgTn}. \textit{Via} \eqref{MartCroch}, \eqref{EstTCvgMartRes} and \eqref{EstTVecEq2},
\begin{equation}
\label{EstTVecEq3}
T^{*}\,(I_{q} - C_{\alpha}^{\, \prime}) - J_{p}\, T^{*}\, C_{\gamma} = D.
\end{equation}
The latter relation is a generalized Sylvester matrix equation, deeply studied for example in \cite{Chu87}. From Theorem 1 of the same reference, we know that \eqref{EstTVecEq3} has a unique solution for $T^{*}$ if and only if $I_{p} - \lambda J_{p}$ and $C_{\gamma} - \lambda (I_{q} - C_{\alpha}^{\, \prime})$ are regular matrix pencils (which is obvious here) and if $\det(C_{\gamma} - \lambda (I_{q} - C_{\alpha}^{\, \prime})) \neq 0$ for $\vert \lambda \vert = 1$. As a matter of fact, it is not hard to see that $\det(I_{p} - \lambda J_{p}) = 0$ is only satisfied for $\vert \lambda \vert = 1$. Assuming that a solution exists, it is given by
\begin{equation}
\label{EstTSol}
\vec(T^{*}) = \big( (I_{q} - C_{\alpha}) \otimes I_{p}  -  C_{\gamma} \otimes J_{p} \big)^{-1}\, \vec(D) = K^{-1}\, \vec(D).
\end{equation}
Since the study of $\det(C_{\gamma} - \lambda (I_{q} - C_{\alpha}^{\, \prime}))$ seems far too complicated for any dimensions $p$ and $q$, we only \textcolor{black}{assume the invertibility of $K$.} We will not detail the case where $p < q$ since the reasoning is very similar. Indeed, it is enough to replace $C_{\alpha}$ by $G_{\alpha}$ and $D$ by $E$ to get the same results \textit{via} the same lines.
\hfill \qedsymbol

\bigskip

\noindent \textbf{Proof of Proposition \ref{PropTlcTn}.} For this proof, one also needs some ergodic properties on the fourth-order moments of the process.

\begin{lem}
\label{LemCvgSumY4}
Under the causality assumptions on $\cA$ and $\cB$ and as soon as $\dE[V_1^4] = \tau^4 < \infty$, we have the almost sure convergence
\begin{equation*}
\limn \frac{1}{n} \sum_{t=1}^{n} Y_{t}^4 = \kappa^4 < \infty \cvgas
\end{equation*}
In particular, we also have the almost sure convergence
\begin{equation*}
\limn \frac{Y_{n}^4}{n} = 0 \cvgas
\end{equation*}
\end{lem}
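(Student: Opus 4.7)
The plan is to mimic the proof of Lemma \ref{LemCvgSumY2}, upgrading the moment condition from order two to order four. The first step is to secure $\dE[Y_1^4] < \infty$. From the MA$(\infty)$ representation \eqref{ModMAInf} obtained in the proof of Lemma \ref{LemCausal}, we have $Y_t = \sum_{k=0}^{\infty} \psi_k V_{t-k}$ with $\sum_{k=0}^{\infty} |\psi_k| < \infty$, where the $(V_{t})$ are iid with $\dE[V_1^4] = \tau^4 < \infty$. Applying Minkowski's inequality to the partial sums and passing to the limit by monotone convergence yields
\begin{equation*}
\big( \dE[Y_1^4] \big)^{1/4} \leq \tau \sum_{k=0}^{\infty} |\psi_k| < \infty,
\end{equation*}
so that $\dE[Y_1^4] = \kappa^4 < \infty$.

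Next, Lemma \ref{LemCausal} asserts that $(Y_{t})$ is ergodic, and since $y \mapsto y^4$ is continuous, the image process $(Y_{t}^4)$ is also stationary and ergodic as a measurable function of $(Y_{t})$. Birkhoff's ergodic theorem therefore gives
\begin{equation*}
\limn \frac{1}{n} \sum_{t=1}^{n} Y_{t}^4 = \dE[Y_1^4] = \kappa^4 \cvgas
\end{equation*}
As in the proof of Lemma \ref{LemCvgSumY2}, the transient effect induced by the initial values $Y_{-(p+q)}, \hdots, Y_0$ being set to zero is asymptotically negligible in the Cesàro limit, thanks to the exponential decay of the coefficients $\psi_k$.

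The second claim follows by a standard Cesàro differencing: setting $S_n = \sum_{t=1}^{n} Y_{t}^4$, one writes
\begin{equation*}
\frac{Y_n^4}{n} = \frac{S_n}{n} - \frac{n-1}{n} \cdot \frac{S_{n-1}}{n-1},
\end{equation*}
and the right-hand side tends to $\kappa^4 - \kappa^4 = 0$ almost surely. The only mild obstacle is the rigorous justification of $\dE[Y_1^4] < \infty$ from the MA$(\infty)$ expansion, which is nevertheless a routine Minkowski plus monotone convergence argument already implicit in the treatment of the second-order moments.
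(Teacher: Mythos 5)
Your proof is correct and follows essentially the same route as the paper: ergodicity of $(Y_{t})$ from Lemma \ref{LemCausal} combined with the finiteness of $\dE[Y_1^4]$ deduced from the MA($\infty$) representation \eqref{ModMAInf}, then the ergodic theorem for the Ces\`aro limit and differencing for the second claim. The only (harmless) divergence is in how $\dE[Y_1^4]<\infty$ is obtained: you bound it via Minkowski's inequality in $L^4$, giving $(\dE[Y_1^4])^{1/4}\leq \tau\sum_{k}\vert\psi_{k}\vert$, whereas the paper expands $\dE[Y_{t}^4]$ over the four-fold sum and uses the vanishing of unpaired mixed moments of the noise to identify $\kappa^4$ explicitly; your version is slightly more economical and requires less of the noise's fourth-order mixed-moment structure, at the cost of only an upper bound rather than an exact expression, which is all the lemma needs.
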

\begin{proof}
As soon as $\dE[V_1^4] = \tau^4 < \infty$, this is an immediate consequence of the ergodicity of $(Y_{t})$, stipulated in Lemma \ref{LemCausal}. As a matter of fact, for the stationary process defined on $\dZ$ and using the notations of \eqref{ModMAInf}, there exists $\delta < \infty$ such that 
\begin{eqnarray*}
\dE[Y_{t}^4] & = & \sum_{i=0}^{\infty} \sum_{j=0}^{\infty} \sum_{k=0}^{\infty} \sum_{\ell=0}^{\infty} \psi_{i}\, \psi_{j}\, \psi_{k}\, \psi_{\ell}\, \dE[V_{t-i} V_{t-j} V_{t-k} V_{t-\ell}] \\
 & = & \tau^4 \sum_{k=0}^{\infty} \psi_{k}^4 + \delta \sigma^4 \sum_{k=0}^{\infty} \left[ \sum_{\ell = 0}^{k-1} + \sum_{\ell = k+1}^{\infty} \right] \psi_{k}^2\, \psi_{\ell}^2  = \kappa^4 < \infty
\end{eqnarray*}
since we have already seen that $(\psi_{k})$ is absolutely summable.
\end{proof}
\noindent From \eqref{EstTVecEq2}, we know that, for all $n \geq 1$ and $p \geq q$,
\begin{equation*}
T_{n}\,(I_{q} - C_{\alpha}^{\, \prime}) - J_{p}\, T_{n}\, C_{\gamma} = D + S_{n-1}^{-1}\, M_{n} + S_{n-1}^{-1}\, R_{n}
\end{equation*}
with the notations above. Hence, \textcolor{black}{if $K$ is invertible,} the resolution of this generalized Sylvester matrix equation \cite{Chu87} leads to
\begin{equation}
\label{EstTDecomp}
\vec(T_{n}) = K^{-1}\, \vec(D  + S_{n-1}^{-1}\, M_{n} + S_{n-1}^{-1}\, R_{n}).
\end{equation}
It follows that
\begin{equation}
\label{EstTVecEqTlc}
\sqrt{n}\, \big( \vec(T_{n}) - \vec(T^{*}) \big) = \sqrt{n}\, K^{-1}\, \vec(S_{n-1}^{-1}\, M_{n}) + \sqrt{n}\, K^{-1}\, \vec(S_{n-1}^{-1}\, R_{n})
\end{equation}
where we recall that $M_{n}$ is the matrix $(\cF_{n})$--martingale given by \eqref{MartMat}. First, from the combination of Lemmas \ref{LemCvgSumY2} and \ref{LemCvgSumY4} together with the invertibility of $S_{n-1}$ (assuming a suitable choice of $S$ in \eqref{Sn}) and $\Delta_{p}$ (Lemma \ref{LemInvD}), we have
\begin{equation*}
\limn n\, S_{n-1}^{-1} = \Delta_{p}^{-1} \cvgas \hsp \text{and} \hsp \VVert R_{n} \VVert = o(\sqrt{n}) \cvgas
\end{equation*}
provided that $\dE[V_1^4] = \tau^4 < \infty$. It follows that
\begin{equation}
\label{EstTTlcCvgRes}
\limn \sqrt{n}\, \vec(S_{n-1}^{-1}\, R_{n}) = 0 \cvgas
\end{equation}
Moreover, it is not hard to see that
\begin{equation}
\label{EstTTlcCvgMart1}
\vec(S_{n-1}^{-1}\, M_{n}) = (I_{q} \otimes S_{n-1}^{-1})\, \vec(M_{n})
\end{equation}
where $\vec(M_{n})$ is a vector $(\cF_{n})$--martingale of order $p\, q$, and that
\begin{equation}
\label{EstTTlcCvgMart2}
\limn n\, (I_{q} \otimes S_{n-1}^{-1}) = I_{q} \otimes \Delta_{p}^{-1} \cvgas
\end{equation}
The predictable quadratic variation of $\vec(M_{n})$ is given, for all $n \geq 1$, by
\begin{equation*}
\langle \vec(M) \rangle_{n} = \begin{pmatrix}
\langle M_1, M_1 \rangle_{n} & \langle M_1, M_2 \rangle_{n} & \hdots & \langle M_1, M_{q} \rangle_{n} \\
\langle M_2, M_1 \rangle_{n} & \langle M_2, M_2 \rangle_{n} & & \vdots \\
\vdots & & \ddots & \vdots \\
\langle M_{q}, M_1 \rangle_{n} & \hdots & \hdots & \langle M_{q}, M_{q} \rangle_{n}
\end{pmatrix}
\end{equation*}
where, for all $h, k \in \{1, \hdots, q \}$,
\begin{equation*}
\langle M_{h}, M_{k} \rangle_{n} = \sigma^2 \sum_{t=1}^{n} \Phi_{t-h}^{p}\, \Phi_{t-k}^{p\,\, \prime}.
\end{equation*}
We are now able to establish the asymptotic behavior of $\langle \vec(M) \rangle_{n}$ as a function of the asymptotic covariances $\ell_0, \hdots, \ell_{p+q}$ defined in \eqref{StatL0} and \eqref{StatLh}. Indeed,
\begin{equation*}
\limn \frac{\langle M_{h}, M_{k} \rangle_{n}}{n} = \sigma^2\, \Gamma_{h-k} \cvgas
\end{equation*}
and $\Gamma_{h-k}$ is given in \eqref{Gammahk}. We deduce that
\begin{equation}
\label{EstTTlcCvgMart3}
\limn \frac{\langle \vec(M) \rangle_{n}}{n} = \sigma^2\, \Gamma_{\!pq} \cvgas
\end{equation}
where $\Gamma_{\!pq}$ is precisely \eqref{Gamma}. In addition, the Lindeberg's condition is satisfied for $\vec(M_{n})$. As a matter of fact, if we denote by
\begin{equation}
\label{PhiMartVec}
\varphi_{t-1}^{p\, q} = \begin{pmatrix}
\Phi_{t-1}^{p\,\, \prime} & \Phi_{t-2}^{p\,\, \prime} & \hdots & \Phi_{t-q}^{p\,\, \prime}
\end{pmatrix}^{\prime}
\end{equation}
which is a vector of order $p\, q$, then
\begin{equation*}
\vec(M_{n}) = \sum_{t=1}^{n} \varphi_{t-1}^{p\, q}\, V_{t}.
\end{equation*}
This formulation will be easier to handle in what follows. For all $\veps > 0$,
\begin{eqnarray}
\label{Lindeberg}
\frac{1}{n} \sum_{t=1}^{n} \dE \left[ \Vert \vec(\Delta M_{t}) \Vert^2 ~ \dI_{\left\{ \Vert \vec(\Delta M_{t}) \Vert\, \geq\, \veps \sqrt{n} \right\} }\, \vert\, \cF_{t-1} \right] & \leq & \frac{\tau^4}{\veps^2\, n^2} \sum_{t=1}^{n} \Vert \varphi_{t-1}^{p\, q} \Vert^4\nonumber \\
 & = & O(n^{-1}) \cvgas
\end{eqnarray}
\textit{via} Lemma \ref{LemCvgSumY4}. Consequently, we infer from the central limit theorem for vector martingales (see Corollary 2.1.10 of \cite{Duflo97}) that we have the asymptotic normality
\begin{equation}
\label{EstTTlcCvgMart4}
\frac{\vec(M_{n})}{\sqrt{n}} \liml \cN\big( 0, \sigma^2\, \Gamma_{\!pq} \big).
\end{equation}
Whence we deduce from \eqref{EstTTlcCvgMart1}, \eqref{EstTTlcCvgMart2}, \eqref{EstTTlcCvgMart4} and Slutsky's lemma that
\begin{equation}
\label{EstTTlcCvgMart5}
\sqrt{n}\, K^{-1}\, \vec(S_{n-1}^{-1}\, M_{n}) \liml \cN\big( 0,  \sigma^2\, K^{-1}\, (I_{q} \otimes \Delta_{p}^{-1})\, \Gamma_{\!pq}\, (I_{q} \otimes \Delta_{p}^{-1})\, K^{\, \prime\, -1} \big).
\end{equation}
Together with \eqref{EstTVecEqTlc} and \eqref{EstTTlcCvgRes}, this achieves the proof of Proposition \ref{PropTlcTn} for $p \geq q$. The proof for $p < q$ is similar.
\hfill \qedsymbol

\bigskip

\noindent \textbf{Proof of Proposition \ref{PropRatTn}.} Let us get back to \eqref{EstTDecomp} which leads together with \eqref{EstTTlcCvgMart1} to
\begin{equation}
\label{EstTDecomp2}
\vec(T_{n}) - \vec(T^{*}) = K^{-1} (I_{q} \otimes S_{n-1}^{-1})\, \vec(M_{n}) + K^{-1}\, \vec(S_{n-1}^{-1}\, R_{n}).
\end{equation}
We will not develop entirely the proof of this proposition, since it follows exactly the same lines as the proof of Theorem 2.3 in \cite{Proia13}. We first establish, using Lemma \ref{LemCvgSumY4} and the same notations as above, that
\begin{equation}
\label{EstTRatCond}
\sum_{n=1}^{\infty} \frac{\Vert \varphi_{n-1}^{p\, q} \Vert^4}{n^2} < \infty \cvgas
\end{equation}
From \eqref{EstTTlcCvgMart3}, \eqref{EstTRatCond} and using Theorem 2.1 of \cite{ChaabaneMaaouia00}, we infer that the vector $(\cF_{n})$--martingale $\vec(M_{n})$ satisfies the quadratic strong law described by
\begin{equation}
\label{EstTRatLfq}
\limn \frac{1}{\log n} \sum_{t=1}^{n} \frac{\vec(M_{t})\, \vec(M_{t})^{\prime}}{t^2} = \sigma^2\, \Gamma_{\!pq} \cvgas
\end{equation}
where $\Gamma_{\!pq}$ is given in \eqref{Gamma}. From \eqref{EstTDecomp2}, we obtain that
\begin{equation}
\label{EstTRatDecompLfq}
\big( V_{T_{n}} - V_{T^{*}} \big) \big( V_{T_{n}} - V_{T^{*}} \big)^{\prime} = K^{-1}\, (I_{q} \otimes S_{n-1}^{-1})\, V_{M_{n}}\, V_{M_{n}}^{\, \prime} (I_{q} \otimes S_{n-1}^{-1})\, K^{\, \prime\, -1} + \zeta_{n}
\end{equation}
where $V_{T_{n}} = \vec(T_{n})$, $V_{T^{*}} = \vec(T^{*})$, $V_{M_{n}} = \vec(M_{n})$ and where the remainder term is given, for all $n \geq 1$, by
\begin{equation*}
\zeta_{n} = K^{-1}\, (I_{q} \otimes S_{n-1}^{-1})\,\left[ V_{M_{n}}\, V_{R_{n}}^{\, \prime} + V_{R_{n}}\, V_{M_{n}}^{\, \prime} + V_{R_{n}}\, V_{R_{n}}^{\, \prime} \right]\, (I_{q} \otimes S_{n-1}^{-1})\, K^{\, \prime\, -1}
\end{equation*}
with $V_{R_{n}} = \vec(R_{n})$. We also know from Lemma \ref{LemCvgSumY4} that, under our hypotheses, $\Vert \vec(R_{n}) \Vert = o(\sqrt{n})$ a.s. and especially
\begin{equation*}
\Vert \vec(R_{n}) \Vert = o(\Vert \vec(M_{n}) \Vert) \cvgas
\end{equation*}
The latter remark together with \eqref{EstTRatLfq} directly shows that
\begin{equation}
\label{EstTRatLfqRes}
\limn \frac{1}{\log n} \sum_{t=1}^{n} \zeta_{t} = 0 \cvgas
\end{equation}
The combination of \eqref{EstTTlcCvgMart2}, \eqref{EstTRatLfq} \eqref{EstTRatDecompLfq} and \eqref{EstTRatLfqRes} concludes the first part of the proof. The law of iterated is much more easy to handle. From Lemma C.2 in \cite{Bercu98}, for every $v \in \dR^{p\, q}$,
\begin{equation}
\label{EstTLliMart}
\limsup_{n\, \rightarrow\, \infty} \frac{v^{\,\prime}\, \vec(M_{n})}{\sqrt{2\, n \log \log n}} = - \liminf_{n\, \rightarrow\, \infty} \frac{v^{\,\prime}\, \vec(M_{n})}{\sqrt{2\, n \log \log n}} = \sigma \sqrt{v^{\,\prime}\, \Gamma_{\!pq}\, v} \cvgas
\end{equation}
Hence,
\begin{equation}
\label{EstTLliRes}
\Vert \vec(S_{n-1}^{-1}\, R_{n}) \Vert = o\!\left( \sqrt{\frac{\log \log n}{n}} \right) \cvgas
\end{equation}
We obtain the matrix formulation
\begin{equation*}
\limsup_{n\, \rightarrow\, \infty} \left( \frac{n}{2 \log \log n} \right)\, \big( V_{T_{n}} - V_{T^{*}} \big) \big( V_{T_{n}} - V_{T^{*}} \big)^{\prime} = \Sigma_{T} \cvgas
\end{equation*}
where $\Sigma_{T}$ is given in \eqref{TCov}, which achieves the proof for $p \geq q$. Once again, the proof follows exactly the same lines for $p < q$.
\hfill \qedsymbol

\bigskip

\noindent \textbf{Proof of Theorem \ref{ThmCvgR}.} This proof is tedious but quite straightforward. Indeed, for all $1 \leq t \leq n$, consider
\begin{equation}
\label{Ppq}
P_{t}^{p\, q} = \begin{pmatrix}
\Phi_{t}^{q} & \Phi_{t-1}^{q} & \hdots & \Phi_{t-p+1}^{q}
\end{pmatrix}
\end{equation}
with initial values set to zero, and note that
\begin{equation*}
\wh{J}_{n} = \sum_{t=0}^{n} \Phi_{t}^{q}\, \Phi_{t}^{q\,\, \prime} - \sum_{t=1}^{n} P_{t-1}^{p\, q}\, \wht_{n}\, \Phi_{t}^{q\,\, \prime} - \sum_{t=1}^{n} \Phi_{t}^{q}\, \wht_{n}^{\: \prime}\, P_{t-1}^{p\, q\,\, \prime} + \sum_{t=1}^{n} P_{t-1}^{p\, q}\, \wht_{n}\, \wht_{n}^{\: \prime}\, P_{t-1}^{p\, q\,\, \prime} + J
\end{equation*}
and that
\begin{equation*}
\sum_{t=1}^{n} \wh{\Psi}_{t-1}^{q}\, \wh{Z}_{t} = \sum_{t=1}^{n} \Phi_{t-1}^{q}\, Y_{t} - \sum_{t=1}^{n} P_{t-2}^{p\, q}\, \wht_{n}\, Y_{t} - \sum_{t=1}^{n} \Phi_{t-1}^{q}\, \wht_{n}^{\: \prime}\, \Phi_{t-1}^{p} + \sum_{t=1}^{n} P_{t-2}^{p\, q}\, \wht_{n}\, \wht_{n}^{\: \prime}\, \Phi_{t-1}^{p}
\end{equation*}
where $\Phi_{t}^{q}$ is given in \eqref{PhiPsi}, $\wh{Z}_{t}$ in \eqref{EstRes}, $\wh{J}_{n}$ in \eqref{Jn} and $\wh{\Psi}_{t}^{q}$ in \eqref{PsiHat}. The end of the proof is achieved by considering the definition of the matrices $\Delta_{h}$ in \eqref{Delta}, $\Gamma_{h-k}$ in \eqref{Gammahk}, and repeatedly making use of Lemma \ref{LemCvgSumY2} to establish the limiting values.
\hfill \qedsymbol

\bigskip

\noindent \textbf{Proof of Theorem \ref{ThmTlcR}.} Under $\cH_0 : ``\rho_1 = \hdots = \rho_{q} = 0"$, all calculations are simplified. In particular, it is easy to establish that, for all $n \geq 1$,
\begin{equation*}
\wh{\theta}_{n} - \theta = S_{n-1}^{-1}\, M_{n} + r_{n}
\end{equation*}
where, as soon as $\dE[V_1^4] = \tau^4 < \infty$, $r_{n} = o(\sqrt{n})$ a.s. and
\begin{equation}
\label{EstRMn}
M_{n} = \sum_{t=1}^{n} Y_{t-1} V_{t}
\end{equation}
is a $(\cF_{n}$)--martingale. The first step of our reasoning is to prove that, under $\cH_0$,
\begin{equation}
\label{CvgDenRnH0}
\limn \frac{\wh{J}_{n}}{n} = \sigma^2 I_{q} \cvgas
\end{equation}
where $\wh{J}_{n}$ is given in \eqref{Jn}. For all $1 \leq h \leq q-1$, we have \textit{via} \eqref{EstRes},
\begin{eqnarray*}
\sum_{t=1}^{n} \wh{Z}_{t-h} \wh{Z}_{t} & = & \sum_{t=1}^{n} \big( Y_{t-h} - \wh{\theta}_{n}^{\: \prime}\, \Phi_{t-h-1}^{p} \big) \big( Y_{t} - \wh{\theta}_{n}^{\: \prime}\, \Phi_{t-1}^{p} \big) \\
 & = & \sum_{t=1}^{n} Y_{t-h} Y_{t} - \wh{\theta}_{n}^{\: \prime} \sum_{t=1}^{n} \Phi_{t-h-1}^{p} Y_{t} - \wh{\theta}_{n}^{\: \prime} \sum_{t=1}^{n} \Phi_{t-1}^{p} Y_{t-h} + \wh{\theta}_{n}^{\: \prime} \sum_{t=1}^{n} \Phi_{t-h-1}^{p} \Phi_{t-1}^{p\: \prime}\, \wh{\theta}_{n} 
\end{eqnarray*}
with initial values set to zero, as usual. Then, the simplifications established in \eqref{CovH0}, the related notations together with Lemma \ref{LemCvgSumY2} imply that
\begin{eqnarray*}
\limn \frac{1}{n} \sum_{t=1}^{n} \wh{Z}_{t-h} \wh{Z}_{t} & = & \ell_{h} - \theta^{\, \prime} \Lambda_{-h+1}^{p} - \theta^{\, \prime} \Lambda_{h+1}^{p} + \theta^{\, \prime}\, \Gamma_{h}\, \theta \\
 & = & \ell_{h} - \theta^{\, \prime} \Lambda_{-h+1}^{p} ~ = ~ 0 \cvgas
\end{eqnarray*}
since, from the Yule-Walker equations, the asymptotic covariance of order $h$ satisfies $\ell_{h} = \theta_1\, \ell_{h-1} + \hdots + \theta_{p}\, \ell_{h-p}$. For $h=0$, it is not hard to establish the convergence to $\sigma^2$ using exactly the same lines, which proves \eqref{CvgDenRnH0}. Besides, a direct calculation under $\cH_0$ leads to
\begin{equation}
\label{EstRTlcDecomp}
\sum_{t=1}^{n} \wh{\Psi}_{t-1}^{q}\, \wh{Z}_{t} = A_{n} + R_{1,\, n} + R_{2,\, n} + R_{n}
\end{equation}
with
\begin{eqnarray*}
A_{n} & = & \sum_{t=1}^{n} \Phi_{t-1}^{q}\, Y_{t} - \sum_{t=1}^{n} P_{t-2}^{p\, q}\, \theta\, Y_{t} - \sum_{t=1}^{n} \Phi_{t-1}^{q}\, \theta^{\, \prime}\, \Phi_{t-1}^{p} + \sum_{t=1}^{n} P_{t-2}^{p\, q}\, \theta\, \theta^{\, \prime}\, \Phi_{t-1}^{p}, \\
R_{1,\, n} & = & \sum_{t=1}^{n} \big( P_{t-2}^{p\, q}\, \theta - \Phi_{t-1}^{q} \big) \Phi_{t-1}^{p\: \prime}\, \big( \wh{\theta}_{n} - \theta \big) - \sum_{t=1}^{n} P_{t-2}^{p\, q}\, \big( \wh{\theta}_{n} - \theta \big) \big( Y_{t} - \theta^{\, \prime}\, \Phi_{t-1}^{p} \big), \\
R_{2,\, n} & = & \sum_{t=1}^{n} P_{t-2}^{p\, q}\, \big( \wh{\theta}_{n} - \theta \big) \big( \wh{\theta}_{n} - \theta \big)^{\prime}\, \Phi_{t-1}^{p},
\end{eqnarray*}
in which $P_{t}^{p\, q}$ is given in \eqref{Ppq} and $R_{n}$ is a residual made of isolated terms satisfying, by virtue of Lemma \ref{LemCvgSumY4}, $\Vert R_{n} \Vert = o(\sqrt{n})$ a.s. On the one hand, from Lemma \ref{LemCvgSumY2} together with Theorem \ref{ThmRatT} (and particularly \eqref{EstTRatNorm}), we obtain the upper bound
\begin{equation*}
\Vert R_{2,\, n} \Vert ~ \leq ~ \big\Vert \wh{\theta}_{n} - \theta \big\Vert^2\, O(n) ~ = ~ O(\log \log n) \cvgas
\end{equation*}
which clearly implies that
\begin{equation}
\label{EstRTlcR2}
\limn \frac{R_{2,\, n}}{\sqrt{n}} = 0 \cvgas
\end{equation}
On the other hand, for all $1 \leq t \leq n$, $Y_{t} = \theta^{\, \prime}\, \Phi_{t-1}^{p} + V_{t}$. It follows that
\begin{equation}
\label{EstRDecompA}
A_{n} = \sum_{t=1}^{n} \big( \Phi_{t-1}^{q} - P_{t-2}^{p\, q}\, \theta \big) V_{t} = \sum_{t=1}^{n} \varphi_{t-1}^{q} V_{t}
\end{equation}
where
\begin{equation*}
\varphi_{t}^{\, q} = \begin{pmatrix}
V_{t} & V_{t-1} & \hdots & V_{t-q+1}
\end{pmatrix}^{\prime}.
\end{equation*}
Using the same methodology, we find that
\begin{equation*}
\Big\Vert \sum_{t=1}^{n} P_{t-2}^{p\, q}\, \big( \wh{\theta}_{n} - \theta \big) V_{t} \Big\Vert \leq \big\Vert \wh{\theta}_{n} - \theta \big\Vert\, \Big\Vert \sum_{t=1}^{n} P_{t-2}^{p\, q}\, V_{t} \Big\Vert = O_{\! p}\big( \sqrt{\log \log n} \big)
\end{equation*}
from Theorem \ref{ThmRatT} and using the central limit theorem for vector martingales (see Corollary 2.1.10 of \cite{Duflo97}) which immediately gives the $O_{\! p}(\sqrt{n})$ rate for the right-hand side of the inequality. Consequently, \textit{via} \eqref{EstRMn},
\begin{equation}
\label{EstRTlcR1}
\frac{R_{1,\, n}}{\sqrt{n}} = -\frac{1}{{\sqrt{n}}}\, \sum_{t=1}^{n} \varphi_{t-1}^{\, q} \Phi_{t-1}^{p\: \prime}\, S_{n-1}^{-1}\, \sum_{t=1}^{n} \Phi_{t-1}^{p} V_{t} \: + \: o_{p}(1).
\end{equation}
Now, consider the vector $(\cF_{n})$--martingale of order $p+q$ given, for all $n \geq 1$, as
\begin{equation}
\label{Nn}
N_{n} = \sum_{t=1}^{n} H_{t-1}^{p+q}\, V_{t} \hsp \text{with} \hsp H_{t} = \begin{pmatrix}
\varphi_{t}^{\,q\: \prime} & \Phi_{t}^{p\: \prime}
\end{pmatrix}^{\prime}.
\end{equation}
From \eqref{EstRTlcDecomp}--\eqref{Nn}, we can deduce that
\begin{equation}
\label{EstRTlcDecomp2}
\frac{1}{\sqrt{n}}\, \sum_{t=1}^{n} \wh{\Psi}_{t-1}^{q}\, \wh{Z}_{t} = \frac{1}{\sqrt{n}}\, U_{n-1}\, N_{n} \: + \: o_{p}(1)
\end{equation}
where, for all $n \geq 1$,
\begin{equation*}
\Upsilon_{n} = \sum_{t=1}^{n} \varphi_{t}^{\, q}\, \Phi_{t}^{p\: \prime} \hsp \text{and} \hsp U_{n} = \begin{pmatrix}
I_{q} & -\Upsilon_{\!n}\, S_{n}^{-1}
\end{pmatrix}.
\end{equation*}
The predictable quadratic variation of $N_{n}$ is given, for all $n \geq 1$, by
\begin{equation}
\label{MartCrochNn}
\langle N \rangle_{n} = \sum_{t=1}^{n} \dE[ (\Delta N_{t})(\Delta N_{t})^{\prime}\, \vert\, \cF_{t-1} ] = \sigma^2 \sum_{t=1}^{n} H_{t-1}^{p+q}\, H_{t-1}^{p+q\: \prime}
\end{equation}
and satisfies, after some additional calculations, 
\begin{equation*}
\limn \frac{\langle N \rangle_{n}}{n} = \begin{pmatrix}
\sigma^4 I_{q} & \sigma^2\, \Upsilon_{\!pq} \\
\sigma^2\, \Upsilon_{\!pq}^{\, \prime} & \sigma^2\, \Delta_{p}
\end{pmatrix} = \Gamma_{N} \cvgas
\end{equation*}
where $\Upsilon_{\!pq}$ is the almost sure limit of $\Upsilon_{n}/n$, explicitly given in \eqref{Ups1} (or \eqref{Ups2}). In addition, $N_{n}$ satisfies the Lindeberg's condition (following the same lines as what we did in \eqref{Lindeberg}). Hence, from \eqref{EstRTlcDecomp2}, Slutsky's lemma and once again the central limit theorem for vector martingales,
\begin{equation*}
\frac{1}{\sqrt{n}}\, \sum_{t=1}^{n} \wh{\Psi}_{t-1}^{q}\, \wh{Z}_{t} \liml \cN\big(0, \Gamma_{Z} \big)
\end{equation*}
where
\begin{eqnarray*}
\Gamma_{Z} & = & \begin{pmatrix}
I_{q} & -\Upsilon_{\!pq}\,\Delta_{p}^{-1}
\end{pmatrix} \Gamma_{N} \begin{pmatrix}
I_{q} & -\Upsilon_{\!pq}\,\Delta_{p}^{-1}
\end{pmatrix}^{\prime} \\
 & = & \begin{pmatrix}
\sigma^4 I_{q} - \sigma^2\, \Upsilon_{\!pq}\, \Delta_{p}^{-1}\, \Upsilon_{\!pq}^{\: \prime} & 0
\end{pmatrix} \begin{pmatrix}
I_{q} & -\Upsilon_{\!pq}\,\Delta_{p}^{-1}
\end{pmatrix}^{\prime} = \sigma^4 I_{q} - \sigma^2\, \Upsilon_{\!pq}\, \Delta_{p}^{-1}\, \Upsilon_{\!pq}^{\: \prime}.
\end{eqnarray*}
The definition of $\whr_{n}$ in \eqref{EstR} together with \eqref{CvgDenRnH0} and Slutsky's lemma imply that $\Sigma_{\rho}^{\, 0} = \sigma^{-4} \Gamma_{Z}$, which achieves the proof.
\hfill \qedsymbol

\bigskip

\nocite{*}

\bibliographystyle{acm}
\bibliography{ARARModels}

\medskip

\end{document}